\newcommand{\Aprime}{A\!'}}
\newcommand{\Aprime}{A'}
\let\mathbold=\mathbf}
\providecommand{\cites}[1]{\cite{#1}}
\providecommand{\amscite}[3]{\cite[#3]{#1}}
\providecommand{\citelist}[1]{#1}
\DeclareFontFamily{OT1}{cyr}{}
\DeclareFontShape{OT1}{cyr}{m}{n}
   {  <5> <6> <7> <8> <9> gen * wncyr
      <10> <10.95> <12> <14.4> <17.28> <20.74> <24.88> wncyr10}{}
\DeclareFontShape{OT1}{cyr}{m}{it}
    {
       <5> <6> <7> <8> <9> gen * wncyi
      <10> <10.95> <12> <14.4> <17.28> <20.74> <24.88>wncyi10
      }{}
\DeclareFontShape{OT1}{cyr}{m}{ss}
    {
       <5> <6> <7> <8> wncyss8
       <9> wncy9
      <10> <10.95> <12> <14.4> <17.28> <20.74> <24.88>wncyss10
      }{}
\DeclareFontShape{OT1}{cyr}{m}{sc}
    {
       <5> <6> <7> <8> <9> <10> <10.95> <12> <14.4> <17.28> <20.74> <24.88>wncysc10
      }{}
\DeclareFontShape{OT1}{cyr}{bx}{n}
   {
       <5> <6> <7> <8> <9> gen * wncyb
      <10> <10.95> <12> <14.4> <17.28> <20.74> <24.88>wncyb10
      }{}
\DeclareTextFontCommand{\textcyr}{\fontfamily{cyr}\selectfont}
\providecommand{\cyr}{\fontfamily{cyr}\selectfont\def\cprime{\~}}
\providecommand{\cprime}{'}
\providecommand{\Space}[3][]{\ifx#2R\ifx#1e \mathbb{C}^{#3} \else
\ifx#1p \mathbb{D}^{#3} \else
\ifx#1h \mathbb{O}^{#3} \else
\ifx#1\sigma \mathbb{A}^{#3} \else
\ensuremath{\mathbb{#2}^{#3}_{#1}{}} \fi \fi \fi \fi \else
\ensuremath{\mathbb{#2}^{#3}_{#1}{}} \fi}
\providecommand{\dSpace}[2]{\dot{\mathbb{#1}}^{#2}}
\providecommand{\Space}[3][]{\ensuremath{\mathbb{#2}^{#3}_{#1}{}}}
\providecommand{\SL}[1][2]{\ensuremath{\mathrm{SL}_{#1}(\Space{R}{})}}
\providecommand{\oper}[1]{\mathcal{#1}}
\providecommand{\such}{\,\mid\,}
\providecommand{\tr}{\mathop{\mathrm{tr}}}
\providecommand{\scalar}[3][\relax]{\left\langle #2,#3 
        \right\rangle\ifx#1\relax\else_{#1}\fi}
\providecommand{\modulus}[2][\relax]{\left| #2 \right|\ifx#1\relax\else_{#1}\fi}
  \providecommand{\Zbl}[1]{Zbl\href{http://www.emis.de:80/cgi-bin/zmen/ZMATH/en/zmathf.html?first=1&maxdocs=3&type=html&an=#1&format=complete}{#1}}
\providecommand{\cycle}[3][]{{#1 C^{#2}_{#3}}}
\providecommand{\bs}{\tau}
\providecommand{\map}[1]{\mathbold{#1}}
\providecommand{\alli}{\iota}
\newtheorem{thm}{Theorem}
\newtheorem{prop}[thm]{Proposition}
\newtheorem{lem}[thm]{Lemma}
\newtheorem{cor}[thm]{Corollary}
\theoremstyle{definition}
\newtheorem{defn}[thm]{Definition}
\theoremstyle{remark}
\newtheorem{rem}[thm]{Remark}
\begin{document}
\title{Poincar\'e Extension of M\"obius Transformations}

\author
{\name{\href{http://www.maths.leeds.ac.uk/~kisilv/}{Vladimir V. Kisil}}
\thanks{On  leave from Odessa University.}
\thanks{Email: \texttt{kisilv@maths.leeds.ac.uk}.
Web: {http://www.maths.leeds.ac.uk/\texttildelow{}kisilv/}}
\affil
{School of Mathematics,
University of Leeds,
Leeds LS2\,9JT,
England} 
\thanks{AMSMSC[2010]: Primary 30C20; Secondary 30C35.}
}
\maketitle

\begin{abstract}
  Given sphere preserving (M\"obius) transformations in
  \(n\)-dimensional
  Euclidean space one can use the Poincar\'e extension to obtain
  sphere preserving transformations in a half-space of \(n+1\)
  dimensions. The Poincar\'e extension is usually provided either by
  an explicit formula or by some geometric construction. We
  investigate its algebraic background and describe all available
  options. The solution is given in terms of one-parameter subgroups
  of M\"obius transformations acting on triples of quadratic forms.
  To focus on the concepts, this paper deals with the M\"obius
  transformations of the real line only.
\end{abstract}

\begin{keywords}
M\"obius transformation, Poincar\'e extension, quadratic
  forms, Cayley--Klein geometries.  
\end{keywords}


\centerline{\emph{Dedicated to John Ryan on the occasion of his 60th birthday}}

\section{Introduction}
\label{sec:introduction}

It is known, that M\"obius transformations on \(\Space{R}{n}\) can be
expanded to the ``upper'' half-space in \(\Space{R}{n+1}\) using the
Poincar\'e extension~\citelist{\amscite{Beardon95}*{\S~3.3}
  \amscite{JParker07a}*{\S~5.2}}.  An explicit formula is usually
presented without a discussion of its origin. In particular, one may
get an impression that the solution is unique. This paper considers
various aspects of such extension and describes different possible
realisations. Our consideration is restricted to the case of extension
from the real line to the upper half-plane. However, we made an effort
to present it in a way, which allows numerous further generalisations.

\section{Geometric construction}
\label{sec:geom-constr}

We start from the geometric procedure in the standard situation. The
group \(\SL\) consists of  real \(2\times 2\) matrices with the unit determinant.
\(\SL\) acts on the real line by linear-fractional maps:
\begin{equation}
  \label{eq:moebius-map-defn}
  \begin{pmatrix}
    a&b\\c&d
  \end{pmatrix}: x \mapsto \frac{ax+b}{cx+d},
  \quad \text{ where } 
  x\in\Space{R}{}\text{ and } 
    \begin{pmatrix}
    a&b\\c&d
  \end{pmatrix}\in\SL.
\end{equation}
A pair of (possibly equal) real numbers \(x\)
and \(y\)
uniquely determines a semicircle \(\cycle{}{xy}\)
in the upper half-plane with the diameter \([x,y]\).
For a linear-fractional transformation
\(M\)~\eqref{eq:moebius-map-defn},
the images \(M(x)\)
and \(M(y)\)
define the semicircle with the diameter \([M(x),M(y)]\),
thus, we can define the action of \(M\)
on semicircles: \(M(\cycle{}{xy})=\cycle{}{M(x)M(y)}\).
Geometrically, the Poincar\'e extension is based on the following
lemma, see Fig.~\ref{fig:poincare-extension}(a) and more general
Lem.~\ref{le:poincare-geom-general} below:
\begin{lem}
  \label{le:pencil-common-point}
  If a pencil of semicircles in the upper half-plane has a common
  point, then the images of these semicircles under a
  transformation~\eqref{eq:moebius-map-defn} have a common point as
  well.
\end{lem}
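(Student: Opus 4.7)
The natural approach is to view the real M\"obius transformation \(M\) in~\eqref{eq:moebius-map-defn} as the restriction of the corresponding complex M\"obius transformation acting on the Riemann sphere \(\Space{C}{}\cup\{\infty\}\), and then to exploit the classical fact that such transformations send generalised circles (circles and straight lines) to generalised circles.

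First, I would associate to each semicircle \(\cycle{}{xy}\) its completion \(\widetilde{C}_{xy}\subset\Space{C}{}\) to a full circle with the same diameter \([x,y]\); this full circle is symmetric under complex conjugation \(z\mapsto\bar z\). A pencil of semicircles through a common point \(P\) in the upper half-plane then lifts to a family of full circles, each passing through both \(P\) and \(\bar P\).

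Next, because \(M\) has real coefficients, its complex extension commutes with complex conjugation and preserves the real line; the upper half-plane is preserved too, since \(\mathrm{Im}\,M(z)=\mathrm{Im}(z)/\modulus{cz+d}^2\) under the normalisation \(ad-bc=1\). Consequently, for every \(\widetilde{C}_{xy}\) the image \(M(\widetilde{C}_{xy})\) is a generalised circle that is symmetric under conjugation and passes through \(M(x)\) and \(M(y)\); it must therefore coincide with \(\widetilde{C}_{M(x)M(y)}\), the completion of \(\cycle{}{M(x)M(y)}\). The point \(M(P)\) then lies in the upper half-plane and on every image semicircle, providing the desired common point.

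The main obstacle is a bookkeeping issue rather than a conceptual one: when some \(cx+d\) vanishes, an endpoint of the diameter is sent to infinity and the image ``semicircle'' degenerates into a vertical half-line in the upper half-plane. This can be accommodated by working projectively throughout, or by handling the degenerate case separately---routine but essential to cover every element of \(\SL\). In the full paper one should expect the author to replace this circle-based argument by an algebraic one that generalises to the elliptic/parabolic/hyperbolic conics depicted in the figures, which is presumably the content of the promised more general Lem.~\ref{le:poincare-geom-general}.
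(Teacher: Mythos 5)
Your argument is correct: completing each semicircle to a full circle through \(P\) and \(\bar P\), invoking the circle-preserving property of the complex extension of \(M\), and using \(\mathrm{Im}\,M(z)=\mathrm{Im}(z)/\modulus{cz+d}^2\) together with conjugation-symmetry to identify the image circle as the completion of \(\cycle{}{M(x)M(y)}\) is a complete and standard proof, modulo the degenerate case \(cx+d=0\) which you correctly flag and which is handled by working in \(P\Space{R}{1}\). However, this is a genuinely different route from the paper's. The paper never proves Lem.~\ref{le:pencil-common-point} by lifting to the Riemann sphere; it states the lemma as the geometric motivation and recovers it later as a special case of Lem.~\ref{le:poincare-geom-general}, whose proof is the one-line observation that a common point of two cycles is encoded by a third cycle \(\hat{C}\) satisfying the conditions~\eqref{eq:cycles-passing-point} and~\eqref{eq:cosine-real-axis}, all of which are expressed through the \(\SL\)-invariant pairing~\eqref{eq:cycle-product-expl} and are therefore preserved by any M\"obius transformation. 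What the paper's route buys is exactly what your closing paragraph anticipates: the argument is purely real and algebraic (the complete absence of complex numbers is an advertised feature of the paper), it treats circles, focally orthogonal parabolas and equilateral hyperbolas uniformly through the three values of \(\tau\), and it covers cycles meeting the real line at an arbitrary fixed angle, none of which survives the passage to generalised circles on \(\Space{C}{}\cup\{\infty\}\). What your route buys is immediacy: it proves the stated elliptic case from classical facts without any of the machinery of Sections~\ref{sec:cycles}--\ref{sec:geom-cycl}. As a self-contained proof of the lemma as literally stated, yours is perfectly adequate; as a proof that scales to the rest of the paper, it is not the one the author intends.
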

Elementary geometry of right triangles tells that a pair of
intersecting intervals \([x,y]\), \([x',y']\), where \(x<x'<y<y'\),
defines the point
\begin{equation}
  \label{eq:poincare-point-ell}
  \left( \frac{xy-x'y'}{x+y-x'-y'}\,,\,
    \frac{\sqrt{(x-y')(x-x')(x'-y)(y-y')}}{x+y-x'-y'}\right) \in \Space[+]{R}{2}.
\end{equation}
An alternative demonstration uses three observations:
\begin{enumerate}
\item the scaling \(x\mapsto ax\),
  \(a>0\)
  on the real line produces the scaling \((u,v) \mapsto(au,av)\)
  on pairs~\eqref{eq:poincare-point-ell};
\item the horizontal shift \(x\mapsto x+b\) on the real line produces
  the horizontal shift \((u,v) \mapsto (u+b,v)\) on
  pairs~\eqref{eq:poincare-point-ell}; 
\item for the special case \(y=-x^{-1}\) and \(y'=-{x'}^{-1}\) 
  the pair~\eqref{eq:poincare-point-ell} is \((0,1)\). 
\end{enumerate}
Finally, expression~\eqref{eq:poincare-point-ell}, as well
as~\eqref{eq:poincare-point-hyp}--\eqref{eq:poincare-point-par} below,
can be calculated by the specialised CAS for M\"obius invariant
geometries~\cites{Kisil05b,Kisil14b}.

\begin{figure}[htbp]
  \centering
  (a)\,\includegraphics[scale=.9]{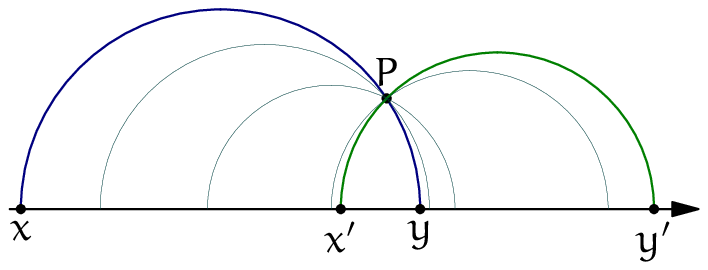}\hfill
  (b)\,\includegraphics[scale=.9]{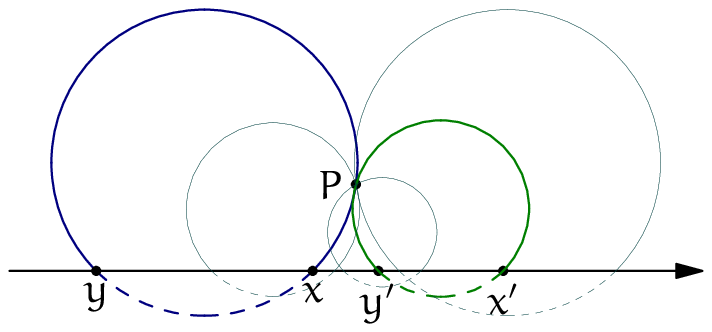}\\[2em]
  (c)\,\includegraphics[scale=.9]{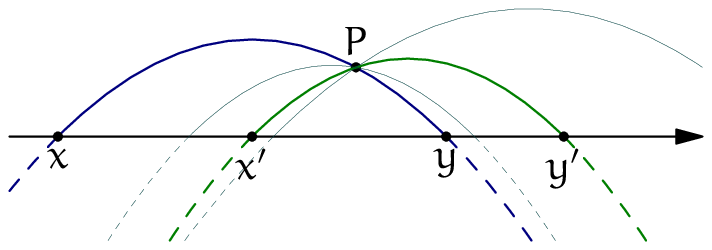}\hfill
  (d)\,\includegraphics[scale=.9]{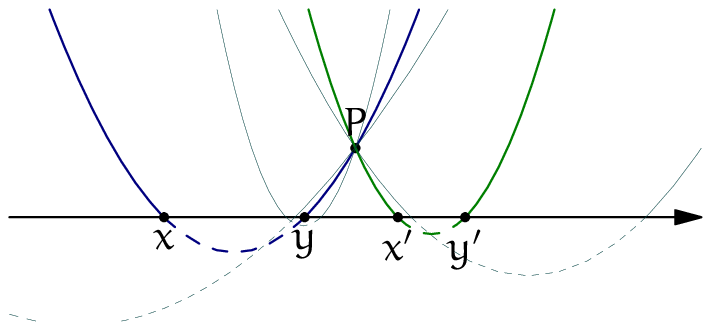}\\[2em]
  (e)\,\includegraphics[scale=.9]{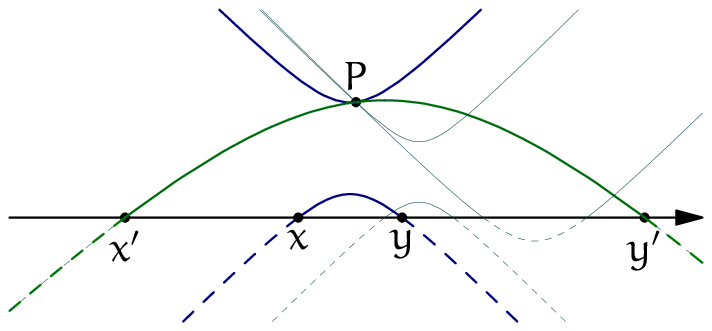}\hfill
  (f)\,\includegraphics[scale=.9]{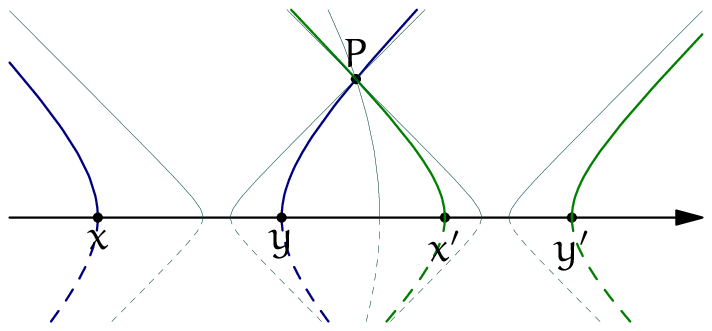}
  \caption[Poincar\'e extensions]{Poincar\'e extensions: first column
    presents points defined by the intersecting intervals \([x,y]\)
    and \([x',y']\), the second column---by disjoint intervals. Each
    row uses the same type of conic sections---circles, parabolas and
    hyperbolas respectively. Pictures are produced by the software~\cite{Kisil14b}.}
  \label{fig:poincare-extension}
\end{figure}

This standard approach can be widened as follows. The above semicircle
can be equivalently described through the unique circle passing \(x\)
and \(y\)
and orthogonal to the real axis. Similarly, an interval \([x,y]\)
uniquely defines a right-angle hyperbola in \(\Space{R}{2}\)
orthogonal to the real line and passing (actually, having her vertices
at) \((x,0)\)
and \((y,0)\).
An intersection with the second such hyperbola having vertices \((x',0)\)
and \((y',0)\)
defines a point with coordinates, see
Fig.~\ref{fig:poincare-extension}(f):
\begin{equation}
  \label{eq:poincare-point-hyp}
  \left( \frac{xy-x'y'}{x+y-x'-y'}\,,\,
    \frac{\sqrt{(x-y')(x-x')(x'-y)(y'-y)}}{x+y-x'-y'}\right), 
\end{equation}
where \(x<y<x'<y'\).  Note, the opposite sign of the product under the
square roots in~\eqref{eq:poincare-point-ell}
and~\eqref{eq:poincare-point-hyp}.

If we wish to consider the third type of conic
sections---parabolas---we cannot use the unaltered procedure: there is
no a non-degenerate parabola orthogonal to the real line and
intersecting the real line in two points. We may recall, that a circle
(or hyperbola) is orthogonal to the real line if its centre belongs to
the real line. Analogously, a parabola is \emph{focally orthogonal} (see
\amscite{Kisil12a}*{\S~6.6} for a general consideration) to the real line
if its focus belongs to the real line. Then, an
interval \([x,y]\) uniquely defines a downward-opened parabola with
the real roots \(x\) and \(y\) and focally orthogonal to the real line.  Two
such parabolas defined by intervals \([x,y]\) and \([x',y']\) have a
common point, see Fig.~\ref{fig:poincare-extension}(c):
\begin{equation}
  \label{eq:poincare-point-par}
  \left(
    \frac{xy'-yx'+D}{x-y-x'+y'}\,,\,
     \frac{(x'-x) (y'-y) (y-x+y'-x')+(x+y-x'-y') D}{(x-y-x'+y')^{2}}
\right),
\end{equation}
where \(D=\pm\sqrt{(x-x')(y-y')(y-x)(y'-x')}\). 
For  pencils of such hyperbolas and parabolas respective variants of
Lem.~\ref{le:pencil-common-point} hold.

Focally orthogonal parabolas make the angle \(45^\circ\) with the real
line. This suggests to replace orthogonal circles and hyperbolas by
conic sections with a fixed angle to the real line, see
Fig.~\ref{fig:poincare-extension}(b)--(e). Of course, to be consistent
this procedure requires a suitable modification of
Lem.~\ref{le:pencil-common-point}, we will obtain it as a byproduct
of our study, see Lem.~\ref{le:poincare-geom-general}. However, the
respective alterations of the above formulae
\eqref{eq:poincare-point-ell}--\eqref{eq:poincare-point-par} become
more complicated in the general case.


The considered geometric construction is elementary and visually
appealing. Now we turn to respective algebraic consideration. 

\section{M\"obius transformations and Cycles}
\label{sec:cycles}
The group \(\SL\) acts on \(\Space{R}{2}\) by matrix multiplication on
column vectors:
\begin{equation}
  \label{eq:left-matrix-mult}
  \oper{L}_g:\ 
  \begin{pmatrix}
    x_1\\x_2
  \end{pmatrix}
  \mapsto
  \begin{pmatrix}
    ax_1+bx_2\\cx_1+dx_2
  \end{pmatrix}=
  \begin{pmatrix}
    a&b\\c&d
  \end{pmatrix}  \begin{pmatrix}
    x_1\\x_2
  \end{pmatrix}
  \,, \quad \text{ where } g=\begin{pmatrix}
    a&b\\c&d
  \end{pmatrix}\in\SL.
\end{equation}
A linear action respects the equivalence relation \(  \begin{pmatrix}
  x_1\\x_2
\end{pmatrix}
\sim   \begin{pmatrix}
  \lambda x_1\\ \lambda x_2
\end{pmatrix}\), \(\lambda\neq 0\) on \(\Space{R}{2}\). The
collection of all cosets for non-zero vectors in \(\Space{R}{2}\) is the \emph{projective
  line} \(P\Space{R}{1}\). Explicitly, a
non-zero vector \(\begin{pmatrix} x_1\\x_2
\end{pmatrix}\in\Space{R}{2}\) corresponds to the point with
homogeneous coordinates
\([x_1:x_2]\in P\Space{R}{1}\). If \(x_2\neq 0\) then this point is
represented by \([\frac{x_1}{x_2}:1]\) as well. 
The embedding \(\Space{R}{} \rightarrow P\Space{R}{1}\) defined by  \(x \mapsto
[x:1]\), 
\(x\in\Space{R}{}\) covers the all but one
of points in \(P\Space{R}{1}\). The exceptional point \([1:0]\) is
naturally identified with the infinity.

The linear action~\eqref{eq:left-matrix-mult} induces a morphism of
the projective line \(P\Space{R}{1}\),
which is called a M\"obius transformation. Considered on the real line
within \(P\Space{R}{1}\),
M\"obius transformations takes fraction-linear form:
\begin{displaymath}
  g:\ 
  [x:1]
  \mapsto
  \left[\frac{ax+b}{cx+d}:1\right]\,, \quad \text{ where } g=\begin{pmatrix}
    a&b\\c&d
  \end{pmatrix}\in\SL \text{ and } cx+d\neq 0.
\end{displaymath}
This \(\SL\)-action on \(P\Space{R}{1}\) is denoted as \(g: x\mapsto
g\cdot x\).  We note that the correspondence of
column vectors and row vectors \( 
\map{i}: \begin{pmatrix}
  x_1\\x_2
\end{pmatrix} \mapsto (x_2, -x_1)\) intertwines
the left multiplication \(\oper{L}_g\)~\eqref{eq:left-matrix-mult} and
the right multiplication \(\oper{R}_{g^{-1}}\) by the inverse matrix:
\begin{equation}
  \label{eq:right-matrix-mult}
  \oper{R}_{g^{-1}}:\ 
    (x_2,-x_1)
  \mapsto
    (cx_1+dx_2,\, -ax_1-bx_2)
    =
    (x_2,-x_1)
  \begin{pmatrix}
    d&-b\\-c&a
  \end{pmatrix}\,.
\end{equation}
We extended the map \(\map{i}\) to \(2\times 2\)-matrices by the rule:
\begin{equation}
  \label{eq:t-map-matrix}
  \map{i}:\
  \begin{pmatrix}
    x_1&y_1\\
    x_2&y_2
  \end{pmatrix}
  \mapsto
  \begin{pmatrix}
    y_2&-y_1\\
    x_2&-x_1
  \end{pmatrix}\,.
\end{equation}
Two columns \(\begin{pmatrix} x\\1
\end{pmatrix}\) and \(\begin{pmatrix} y\\1
\end{pmatrix}\) form the \(2\times 2\) matrix \(M_{xy}=\begin{pmatrix} x&y\\1&1
\end{pmatrix}\). For geometrical reasons appearing in
Cor.~\ref{co:null-set-quadratic}, we call a \emph{cycle} the
\(2\times 2\)-matrix \(\cycle{}{xy}\) defined by
\begin{equation}
  \label{eq:matrix-for-cycle-defn}
\cycle{}{xy}=\frac{1}{2} M_{xy}\cdot\map{i}(M_{xy})=\frac{1}{2} M_{yx}\cdot\map{i}(M_{yx})=
  \begin{pmatrix}
    \frac{x+y}{2}  & -xy\\
    1 & -\frac{x+y}{2}
  \end{pmatrix}\,.
\end{equation}
We note that \(\det \cycle{}{xy}=-(x-y)^2/4\), thus \(\det \cycle{}{xy}=0\) if and
only if \(x=y\). Also, we can consider the M\"obius transformation
produced by the \(2\times 2\)-matrix \(\cycle{}{xy}\) and calculate:
\begin{equation}
  \label{eq:eigenvectors-c_xy}
\cycle{}{xy}
  \begin{pmatrix}
  x\\1  
  \end{pmatrix}=
\lambda   \begin{pmatrix}
  x\\1  
  \end{pmatrix}
\quad \text{and } \quad
\cycle{}{xy}
  \begin{pmatrix}
  y\\1  
  \end{pmatrix}=
-\lambda   \begin{pmatrix}
  y\\1  
  \end{pmatrix}
\quad\text{where } \lambda=\frac{x-y}{2}.
\end{equation}
Thus, points \([x:1]\), \([y:1]\in P\Space{R}{1}\) are fixed by
\(\cycle{}{xy}\). Also,  \(\cycle{}{xy}\) swaps the interval \([x,y]\) and its
complement. 

Due to their structure, matrices \(\cycle{}{xy}\) can be parametrised by
points of \(\Space{R}{3}\).  Furthermore, the map from
\(\Space{R}{2}\rightarrow \Space{R}{3}\) given by \((x,y)\mapsto
\cycle{}{xy}\) naturally induces the projective map \((P\Space{R}{1})^2
\rightarrow P\Space{R}{2}\) due to the identity:
\begin{displaymath}
\frac{1}{2}  \begin{pmatrix}
    \lambda x & \mu y\\
    \lambda & \mu
  \end{pmatrix}
  \begin{pmatrix}
    \mu & -\mu y\\
    \lambda & - \lambda x
  \end{pmatrix}=\lambda \mu    \begin{pmatrix}
    \frac{x+y}{2} & -xy\\
    1 & -\frac{x+y}{2}
  \end{pmatrix}= \lambda \mu \cycle{}{xy}\,.
\end{displaymath}
Conversely, a zero-trace matrix \(
\begin{pmatrix}
  a&b\\c&-a
\end{pmatrix}
\) with a non-positive determinant is projectively equivalent to
a product \(\cycle{}{xy}\)~\eqref{eq:matrix-for-cycle-defn} with
\(x,y=\frac{a\pm\sqrt{a^2+bc}}{c}\). In particular, we can embed a point
 \([x:1]\in P\Space{R}{1}\) to \(2\times 2\)-matrix \(\cycle{}{xx}\) with
 zero determinant. 

The combination
of~\eqref{eq:left-matrix-mult}--\eqref{eq:matrix-for-cycle-defn} implies
that the correspondence \((x,y)\mapsto \cycle{}{xy}\) is \(\SL\)-covariant in the
following sense:
\begin{equation}
  \label{eq:covariance-Cxy}
  g \cycle{}{xy}  g^{-1}=\cycle{}{x'y'}\,,\quad \text{ where }
  x'=g\cdot x \text{ and } y'=g\cdot y.
\end{equation}

To achieve a geometric interpretation of all matrices, we consider the
bilinear form \(Q: \Space{R}{2}\times \Space{R}{2}\rightarrow
\Space{R}{}\) generated by a \(2\times 2\)-matrix \(\begin{pmatrix}
    a&b\\
    c&d
  \end{pmatrix}\):
\begin{equation}
  \label{eq:bilinear-form-defn}
  Q(x, y)=
  \begin{pmatrix}
    x_1 & x_2
  \end{pmatrix}
  \begin{pmatrix}
    a&b\\
    c&d
  \end{pmatrix}
  \begin{pmatrix}
    y_1 \\ y_2
  \end{pmatrix}\, , \quad \text{ where } x=(x_1,x_2),\ y=(y_1,y_2). 
\end{equation}
Due to linearity of \(Q\), the null set 
\begin{equation}
  \label{eq:bilin-form-null-set}
  \{(x,y)\in \Space{R}{2}\times \Space{R}{2} \such Q(x,y)=0\}
\end{equation}
factors to a subset of \(P\Space{R}{1}\times
P\Space{R}{1}\). Furthermore, for the matrices
\(\cycle{}{xy}\)~\eqref{eq:matrix-for-cycle-defn}, a direct calculation
shows that:
\begin{lem}
  \label{lem:biliniar-inner}
  The following identity holds:
  \begin{equation}
    \label{eq:cycle-inner-product}
    \cycle{}{xy}(\map{i}(x'),y')=\tr (\cycle{}{xy} \cycle{}{x'y'})
      =\textstyle  \frac{1}{2}(x+y)(x'+ y')-(x y+x' y')\,.
  \end{equation}
  In particular, the above expression is a symmetric function of the pairs
  \((x,y)\) and \((x',y')\).
\end{lem}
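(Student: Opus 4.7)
This is a bookkeeping identity, so my plan is to unpack the notation, perform both indicated computations directly, and observe that each matches the symmetric closed form on the right.

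First, I would fix the reading of the symbols. According to~\eqref{eq:bilinear-form-defn}, the first argument of \(\cycle{}{xy}(\cdot,\cdot)\) is a row vector and the second is a column vector. Inside \(\map{i}(x')\) the symbol \(x'\) stands for the column \(\bigl(\begin{smallmatrix}x'\\1\end{smallmatrix}\bigr)\) representing \([x':1]\in P\Space{R}{1}\), so by the row-vector definition~\eqref{eq:right-matrix-mult} we have \(\map{i}(x')=(1,-x')\); likewise the second slot \(y'\) is the column \(\bigl(\begin{smallmatrix}y'\\1\end{smallmatrix}\bigr)\). Using the explicit form of \(\cycle{}{xy}\) from~\eqref{eq:matrix-for-cycle-defn}, the product
\[
  \cycle{}{xy}(\map{i}(x'),y')
  =(1,-x')\begin{pmatrix}\frac{x+y}{2}&-xy\\1&-\frac{x+y}{2}\end{pmatrix}\begin{pmatrix}y'\\1\end{pmatrix}
\]
is a three-term multiplication which expands in one line to \(\tfrac{1}{2}(x+y)(x'+y')-xy-x'y'\), i.e.\ the right-hand side of~\eqref{eq:cycle-inner-product}.

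Next I would verify the middle equality by computing the product of the two matrices \(\cycle{}{xy}\) and \(\cycle{}{x'y'}\) of shape~\eqref{eq:matrix-for-cycle-defn}. Only the diagonal entries are needed: the \((1,1)\) entry is \(\tfrac14(x+y)(x'+y')-xy\) and the \((2,2)\) entry is \(\tfrac14(x+y)(x'+y')-x'y'\), whose sum is again \(\tfrac{1}{2}(x+y)(x'+y')-xy-x'y'\). Thus both the bilinear form \(\cycle{}{xy}(\map{i}(x'),y')\) and the trace \(\tr(\cycle{}{xy}\cycle{}{x'y'})\) coincide with the claimed symmetric closed form. The symmetry under \((x,y)\leftrightarrow(x',y')\) is then manifest from that closed form, since it depends only on the symmetric functions \(x+y\), \(xy\), \(x'+y'\), \(x'y'\); alternatively, the trace form makes it automatic from cyclicity, \(\tr(\cycle{}{xy}\cycle{}{x'y'})=\tr(\cycle{}{x'y'}\cycle{}{xy})\).

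There is no genuine obstacle here: the only delicate point is the interpretation of \(\map{i}(x')\) as the row produced by~\eqref{eq:right-matrix-mult} from the column \(\bigl(\begin{smallmatrix}x'\\1\end{smallmatrix}\bigr)\), so that row\,\(\times\)\,matrix\,\(\times\)\,column in~\eqref{eq:bilinear-form-defn} is well defined and agrees with the trace computation. Once that alignment is set, the lemma follows by two direct \(2\times 2\) matrix computations.
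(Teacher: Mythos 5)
Your computation is correct and is exactly the ``direct calculation'' the paper invokes without writing out: both the row--matrix--column product \((1,-x')\,\cycle{}{xy}\bigl(\begin{smallmatrix}y'\\1\end{smallmatrix}\bigr)\) and the sum of the diagonal entries of \(\cycle{}{xy}\cycle{}{x'y'}\) reduce to \(\tfrac{1}{2}(x+y)(x'+y')-xy-x'y'\). Your reading of \(\map{i}(x')\) as the row \((1,-x')\) matches the paper's conventions in~\eqref{eq:right-matrix-mult}, so the proof is complete and takes essentially the same route.
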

The map \(\map{i}\) appearance in~\eqref{eq:cycle-inner-product} is
justified once more by the following result.
\begin{cor}
  \label{co:null-set-quadratic}
  The null set of the quadratic form
  \(\cycle{}{xy}(x')=\cycle{}{xy}(\map{i}(x'),x')\) consists of two points \(x\)
  and \(y\).
\end{cor}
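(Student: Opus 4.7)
The proof is a one-step specialisation of Lemma~\ref{lem:biliniar-inner}. The plan is to substitute \(y'=x'\) directly into identity~\eqref{eq:cycle-inner-product}, so that the symmetric bilinear expression collapses to a quadratic polynomial in a single real variable:
\begin{equation*}
\cycle{}{xy}(x')=\tfrac{1}{2}(x+y)(2x')-(xy+(x')^{2}) = -(x')^{2}+(x+y)x'-xy = -(x'-x)(x'-y).
\end{equation*}
The null set of this polynomial is then visibly \(\{x,y\}\).

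A minor point worth mentioning is the passage to the projective line \(P\Space{R}{1}\). The point \([x':1]\) is represented by the column vector \((x',1)^{T}\), which is fed into the second slot of the bilinear form and, after applying \(\map{i}\), into the first; since the resulting expression is homogeneous of degree two in this representative, the zero locus descends well-definedly to \(P\Space{R}{1}\). The conclusion dovetails with~\eqref{eq:eigenvectors-c_xy}, where \([x:1]\) and \([y:1]\) were already identified as the fixed points of the M\"obius transformation generated by \(\cycle{}{xy}\), providing a consistency check. There is no substantive obstacle here---the corollary simply records the geometric observation that motivates calling \(\cycle{}{xy}\) a \emph{cycle}.
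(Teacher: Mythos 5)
Your proof is correct and matches the paper's intended argument: the corollary is obtained by setting \(y'=x'\) in identity~\eqref{eq:cycle-inner-product} and factoring the resulting quadratic as \(-(x'-x)(x'-y)\), exactly as you do. The eigenvector identity~\eqref{eq:eigenvectors-c_xy} that you cite as a consistency check is precisely the ``alternative'' demonstration the paper sketches immediately after the corollary, so nothing is missing.
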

Alternatively, the identities \(\cycle{}{xy}(x)=\cycle{}{xy}(y)=0\) follows
from~\eqref{eq:eigenvectors-c_xy} and the fact that \(\map{i}(z)\) is
orthogonal to \(z\) for all \(z\in\Space{R}{2}\).  Also, we note that:
\begin{displaymath}
  \map{i}\left(\!\begin{pmatrix}
    x_1 \\ x_2
  \end{pmatrix}\!\right)
  \begin{pmatrix}
    a&b\\
    c&d
  \end{pmatrix}
  \begin{pmatrix}
    y_1 \\ y_2
  \end{pmatrix}=
  \begin{pmatrix}
    x_1 & x_2
  \end{pmatrix}
  \begin{pmatrix}
    -c&-d\\
    a&b
  \end{pmatrix}
  \begin{pmatrix}
    y_1 \\ y_2
  \end{pmatrix}.
\end{displaymath}

Motivated by Lem.~\ref{lem:biliniar-inner}, we call
\(\scalar{\cycle{}{xy}}{\cycle{}{x'y'}}:=-\tr (\cycle{}{xy} \cycle{}{x'y'})\) the
\emph{pairing} of two cycles.  It shall be noted that
the pairing is \emph{not} positively defined, this follows from the
explicit expression~\eqref{eq:cycle-inner-product}. The sign is chosen
in such way, that
\begin{displaymath}
\scalar{\cycle{}{xy}}{\cycle{}{xy}}=-2\det(\cycle{}{xy})=\textstyle \frac{1}{2}(x-y)^2\geq 0.  
\end{displaymath}
Also, an immediate consequence of Lem.~\ref{lem:biliniar-inner} or
identity~\eqref{eq:bilinear-form-defn} is
\begin{cor}
  The  pairing of cycles is invariant under the 
  action~\eqref{eq:covariance-Cxy} of \(\SL\):
  \begin{displaymath}
    \scalar{g\cdot \cycle{}{xy} \cdot g^{-1} }{g\cdot \cycle{}{x'y'} \cdot g^{-1} } =\scalar{\cycle{}{xy}}{\cycle{}{x'y'}}.
  \end{displaymath}
\end{cor}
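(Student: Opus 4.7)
The plan is to reduce the claim to the cyclic property of the matrix trace, since the pairing is defined by $\scalar{\cycle{}{xy}}{\cycle{}{x'y'}}=-\tr(\cycle{}{xy}\cycle{}{x'y'})$, as recorded in Lem.~\ref{lem:biliniar-inner}. The action~\eqref{eq:covariance-Cxy} is by conjugation $C \mapsto gCg^{-1}$, and conjugation manifestly commutes with matrix multiplication: $(gAg^{-1})(gBg^{-1}) = g(AB)g^{-1}$. Taking traces and using $\tr(gXg^{-1})=\tr(X)$ (which is just $\tr(MN)=\tr(NM)$ applied to $M = gX$, $N = g^{-1}$) gives the result immediately.

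Concretely, I would write one short display
\[
  \scalar{g\cycle{}{xy}g^{-1}}{g\cycle{}{x'y'}g^{-1}}
    = -\tr\bigl((g\cycle{}{xy}g^{-1})(g\cycle{}{x'y'}g^{-1})\bigr)
    = -\tr\bigl(g\cycle{}{xy}\cycle{}{x'y'}g^{-1}\bigr)
    = -\tr(\cycle{}{xy}\cycle{}{x'y'})
    = \scalar{\cycle{}{xy}}{\cycle{}{x'y'}},
\]
where the first equality uses the definition of the pairing, the second uses $g^{-1}g = I$, the third uses the cyclicity of the trace, and the fourth is again the definition. No property of $g$ beyond invertibility is actually needed here, so the identity holds for arbitrary invertible $g$ and in particular for $g\in\SL$.

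There is essentially no obstacle: the statement is labelled as an \emph{immediate consequence} in the text, and indeed the only ingredient is cyclicity of the trace. If one prefers the alternative route hinted at via~\eqref{eq:bilinear-form-defn}, one can instead check that $\SL$-conjugation of the matrix of the bilinear form corresponds to changing coordinates by $g^{-1}$ on one side and $g$ on the other, which preserves the form when paired against vectors transformed accordingly; this is slightly longer but equivalent. Either way, no case distinction on the signs or types of cycles is required, and the argument applies uniformly to all three geometries (elliptic, parabolic, hyperbolic) that the paper will develop in what follows.
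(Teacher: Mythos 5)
Your proof is correct and matches the paper's intent: the paper offers no explicit argument, merely calling the corollary an immediate consequence of Lem.~\ref{lem:biliniar-inner} or identity~\eqref{eq:bilinear-form-defn}, and your one-line computation via $(gAg^{-1})(gBg^{-1})=g(AB)g^{-1}$ and cyclicity of the trace is exactly the intended filling-in of that ``immediate'' step. Your remark that only invertibility of $g$ is used is also accurate and adds nothing problematic.
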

From~\eqref{eq:cycle-inner-product}, the null
set~\eqref{eq:bilin-form-null-set} of the form \(Q=\cycle{}{xy}\) can be
associated to the family of cycles \(\{\cycle{}{x'y'} \such
\scalar{\cycle{}{xy}}{\cycle{}{x'y'}}=0, (x',y')\in\Space{R}{2}\times
\Space{R}{2}\}\) which we will call \emph{orthogonal} to \(\cycle{}{xy}\).

\section{Extending cycles}
\label{sec:extending-cycles}

Since bilinear forms with matrices \(\cycle{}{xy}\) have numerous geometric
connections with \(P\Space{R}{1}\), we are looking for a similar
interpretation of the generic matrices. The previous discussion
identified the key ingredient of the recipe: \(\SL\)-invariant
pairing~\eqref{eq:cycle-inner-product} of two forms. Keeping in mind
the structure of \(\cycle{}{xy}\), we will parameterise\footnote{Further
  justification of this parametrisation will be obtained from the
  equation of a quadratic curve~\eqref{eq:quadratics-equation}.} a
generic \(2\times 2\) matrix as \(\begin{pmatrix} l+n&-m\\k&-l+n
\end{pmatrix}\) and consider the corresponding four dimensional vector
\((n,l,k,m)\). Then, the similarity with \(\begin{pmatrix}
    a&b\\c&d
  \end{pmatrix}
  \in\SL\):
\begin{equation}
  \label{eq:sl2-similarity-2x2-matrices}
  \begin{pmatrix}
    {l}'+{n}'&-{m}'\\{k}'&-{l}'+{n}'
  \end{pmatrix}
  =
  \begin{pmatrix}
    a&b\\c&d
  \end{pmatrix}
  \begin{pmatrix}
    l+n&-m\\k&-l+n
  \end{pmatrix}
  \begin{pmatrix}
    a&b\\c&d
  \end{pmatrix}^{-1},
\end{equation}
corresponds to the linear transformation of \(\Space{R}{4}\),
cf.~\amscite{Kisil12a}*{Ex.~4.15}: 
\begin{equation}
  \label{eq:SL2-act-cycle-linear}
  \begin{pmatrix}
    {n}'\\{l}'\\{k}'\\{m}'
  \end{pmatrix}=
  \begin{pmatrix}
    1&0&0&0\\
    0&c b+a d& b d&c a\\
    0&2 c d&d^2&c^2\\
    0&2 a b&b^2&a^2
  \end{pmatrix}
  \begin{pmatrix}
    n\\l\\k\\m
  \end{pmatrix}.
\end{equation}
In particular, this action commutes with the scaling of the first component:
\begin{equation}
  \label{eq:scaling-n}
  \lambda: (n,l,k,m) \mapsto  (\lambda n,l,k,m).
\end{equation}
This expression is helpful in proving the following statement.
\begin{lem}
  \label{le:sl2-invariant-pairing}
  Any \(\SL\)-invariant (in the sense of the
  action~\eqref{eq:SL2-act-cycle-linear}) pairing in \(\Space{R}{4}\) is
  isomorphic to
  \begin{displaymath}
  2\bs \tilde{n}n-2\tilde{l}l+\tilde{k}m+\tilde{m}k
  =
  \begin{pmatrix}
    \tilde{n}&\tilde{l}&\tilde{k}&\tilde{m}
  \end{pmatrix}
  \begin{pmatrix}
    2\bs&0&0&0\\0&-2&0&0\\0&0&0&1\\0&0&1&0
  \end{pmatrix}
  \begin{pmatrix}
    n\\l\\k\\m
  \end{pmatrix},
  \end{displaymath}
  where \(\bs =-1\), \(0\) or \(1\) and 
  \((n,l,k,m)\),  \((\tilde{n},\tilde{l},\tilde{k},\tilde{m})\in\Space{R}{4}\) .
\end{lem}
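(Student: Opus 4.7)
The plan is to exploit the block structure of the action~\eqref{eq:SL2-act-cycle-linear}: the first coordinate $n$ is fixed pointwise by $\SL$, while $(l,k,m)$ transforms as the three-dimensional similarity action~\eqref{eq:sl2-similarity-2x2-matrices} on traceless $2\times 2$ matrices, i.e.\ the adjoint representation of $\SL$. An invariant bilinear form therefore decomposes (once cross terms are shown to vanish) as a trivial piece $\mu\, n\tilde{n}$ plus an invariant form on the adjoint summand.

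First, to rule out the six cross terms coupling $n$ with $l,k,m$, observe that such a term corresponds to an $\SL$-invariant linear functional on the $(l,k,m)$-subspace. Testing the diagonal subgroup $\mathrm{diag}(\lambda,\lambda^{-1})$, which by~\eqref{eq:SL2-act-cycle-linear} sends $(l,k,m)\mapsto(l,\lambda^{-2}k,\lambda^{2} m)$, forces any such functional to be supported on the coordinate $l$; then the upper-unipotent element sends $l\mapsto l+tk$, so only the zero functional survives.

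Second, I would pin down the invariant quadratic piece on $(l,k,m)$ by the same two test elements. Diagonal invariance kills every monomial whose total weight is nonzero, leaving only $l\tilde{l}$, $k\tilde{m}$, $m\tilde{k}$; so the form has shape $\alpha\, l\tilde{l}+\beta\, k\tilde{m}+\gamma\, m\tilde{k}$. Applying the upper unipotent (which sends $(l,k,m)\mapsto(l+tk,k,2tl+t^{2}k+m)$) and comparing coefficients of $l\tilde{k}$, $k\tilde{l}$, and $k\tilde{k}$ for all $t$ yields $\beta=\gamma=-\alpha/2$. Hence this piece equals $c(-2l\tilde{l}+k\tilde{m}+m\tilde{k})$ for some $c\in\Space{R}{}$; equivalently, it is a multiple of the polarised trace form $-\tr(AB)$ on $\mathfrak{sl}_{2}$, whose uniqueness can alternatively be quoted from irreducibility of the adjoint representation.

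Finally, I would normalise. Combining the two previous steps, every invariant pairing takes the shape $\mu\, n\tilde{n}+c(-2l\tilde{l}+k\tilde{m}+m\tilde{k})$. In the generic case $c\neq 0$, rescaling the whole pairing by $1/c$ aligns the $(l,k,m)$-block with the claimed normal form. The residual freedom is the rescaling~\eqref{eq:scaling-n} $n\mapsto\lambda n$, which multiplies $\mu/c$ by $\lambda^{2}>0$; this preserves the sign of $\mu/c$ while letting its absolute value be normalised to $0$ or $2$. Writing $\bs:=\mu/(2c)$ yields $\bs\in\{-1,0,1\}$ as stated. The only genuine obstacle is the cross-term elimination in the first step, which is exactly the assertion that the adjoint representation of $\SL$ carries no nonzero invariant vector; once that is dispatched by the two short test-matrix computations, the remainder is bookkeeping.
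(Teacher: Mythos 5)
Your proof is correct, but it follows a genuinely different route from the paper's. The paper imposes the invariance condition directly as the matrix identity \(T'JT=J\) for the \(4\times 4\) matrix \(T\) of~\eqref{eq:SL2-act-cycle-linear} with symbolic entries \(a,b,c,d\), solves the resulting homogeneous linear system for the sixteen entries of \(J\) (obtaining an explicit generic solution with four free parameters), and then argues that independence from \(a,b,c,d\) kills two of them; the remaining normalisation of \(j_{43}\) and of \(j_{11}\) via the scaling~\eqref{eq:scaling-n} is identical to your final step. You instead exploit the representation-theoretic structure: the action splits as the trivial representation on \(n\) plus the adjoint representation on \((l,k,m)\), cross terms die because the adjoint representation has no nonzero invariant covector, and the \((l,k,m)\)-block is pinned down by weight considerations under the diagonal torus together with one unipotent element, landing on a multiple of the trace form \(-\tr(AB)\) on traceless matrices. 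Your computations of the torus and unipotent actions agree with~\eqref{eq:SL2-act-cycle-linear}, and your identification of \(-2l\tilde{l}+k\tilde m+m\tilde k\) with the polarised trace form correctly supplies the converse direction (invariance under the full group, not just the Borel subgroup you tested against), so the classification closes. What each approach buys: the paper's brute-force linear system is self-contained and needs no representation theory, but is essentially a CAS computation whose intermediate output is opaque; your decomposition explains \emph{why} the answer is \(n\tilde n\) plus the Killing-type form and would scale far better to the higher-dimensional generalisations the paper advertises in its concluding section. Both arguments share the same implicit restriction to the nondegenerate case (the paper scales \(j_{43}\) to \(1\), you assume \(c\neq 0\)); you are in fact the more explicit of the two about this.
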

\begin{proof}
  Let \(T\) be \(4\times 4\) a matrix
  from~\eqref{eq:SL2-act-cycle-linear}, if a \(\SL\)-invariant pairing
  is defined by a \(4\times 4\) matrix \(J=(j_{fg})\), then
  \(T'JT=J\), where \(T'\) is transpose of \(T\). The equivalent
  identity \(T'J=JT^{-1}\) produces a system of homogeneous linear
  equations which has the generic solution:
  \begin{align*}
    j_{12}&=
    j_{13}=
    j_{14}=
    j_{21}=
    j_{31}=
    j_{41}=0,\\
    j_{22}&= \frac{ (d-a) j_{42}-2 b j_{44}}{c}-2  j_{43},&
    j_{23}&=- \frac{b}{c} j_{42},&
    j_{24}&=-\frac{ c j_{42}+2 (a-d) j_{44}}{c},\\
    j_{34}&=\frac{c (a - d) j_{42}+(a -d)^2j_{44}}{ c^2}+ j_{43},&
    j_{33}&=  \frac{b^2}{c^2} j_{44} ,&
    j_{32}&= \frac{b}{c}\frac{c  j_{42}+2  (a-d) j_{44}}{c},
  \end{align*}
  with four free variables \(j_{11}\), \(j_{42}\), \(j_{43}\) and
  \(j_{44}\). Since a solution shall not depend on \(a\), \(b\),
  \(c\), \(d\), we have to put \(j_{42}=j_{44}=0\). Then by the
  homogeneity of the identity \(T'J=JT^{-1}\), we can scale \(j_{43}\)
  to \(1\). Thereafter,  an independent (sign-preserving)
  scaling~\eqref{eq:scaling-n} of \(n\) leaves only three
  non-isomorphic values \(-1\), \(0\), \(1\) of \(j_{11}\).
\end{proof}
The appearance of the three essential different cases \(\bs =-1\),
\(0\) or \(1\) in Lem.~\ref{le:sl2-invariant-pairing} is a
manifestation of the common division of mathematical objects into
elliptic, parabolic and hyperbolic cases \citelist{\cite{Kisil06a}
  \amscite{Kisil12a}*{Ch.~1}}.  Thus, we will use letters ``e'', ``p'',
``h'' to encode the corresponding three values of \(\bs\). 

Now we may describe all \(\SL\)-invariant pairings of bilinear forms.
\begin{cor}
  Any \(\SL\)-invariant (in the sense of the
  similarity~\eqref{eq:sl2-similarity-2x2-matrices}) pairing between
  two bilinear forms \(Q=  \begin{pmatrix}
    l+n&-m\\k&-l+n
  \end{pmatrix}  \) and \(\tilde{Q}=  \begin{pmatrix}
    \tilde{l}+\tilde{n}&-\tilde{m}\\\tilde{k}&-\tilde{l}+\tilde{n}
  \end{pmatrix}  \) is isomorphic to:
  \begin{align}
    \label{eq:cycle-product-expl}
    \scalar[\tau]{Q}{\tilde{Q}}&=-\tr ({Q}_\tau \tilde{Q})\\
    & =
    2\tau \tilde{n}n-2\tilde{l}l+\tilde{k}m+\tilde{m}k, \quad\text{ where }
    {Q}_\tau=  \begin{pmatrix}
      {l}-\tau{n}&-{m}\\{k}&-{l}-\tau{n}, \nonumber 
    \end{pmatrix}
  \end{align}
  and \(\tau=-1\), \(0\) or \(1\).
\end{cor}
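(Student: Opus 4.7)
The proof is essentially a translation of Lemma~\ref{le:sl2-invariant-pairing} through the obvious dictionary, followed by a short verification. My plan is as follows.

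First, I would observe that the parametrisation
\[
Q = \begin{pmatrix} l+n & -m \\ k & -l+n \end{pmatrix} \longleftrightarrow (n,l,k,m) \in \Space{R}{4}
\]
is a linear bijection between the space of $2\times 2$ matrices and $\Space{R}{4}$. By the very definition of the action~\eqref{eq:SL2-act-cycle-linear}, this bijection intertwines the similarity transformation~\eqref{eq:sl2-similarity-2x2-matrices} on matrices with the linear transformation~\eqref{eq:SL2-act-cycle-linear} on $\Space{R}{4}$. Consequently, an $\SL$-invariant pairing between forms $Q$ and $\tilde{Q}$ in the sense of similarity is exactly the same data as an $\SL$-invariant pairing between the vectors $(n,l,k,m)$ and $(\tilde{n},\tilde{l},\tilde{k},\tilde{m})$ in the sense of~\eqref{eq:SL2-act-cycle-linear}.

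Next, Lemma~\ref{le:sl2-invariant-pairing} classifies all such pairings up to isomorphism: any one of them reduces, via a sign-preserving rescaling of $n$, to
\[
2\tau\, \tilde{n}n - 2\tilde{l}l + \tilde{k}m + \tilde{m}k
\]
with $\tau \in \{-1,0,1\}$. This immediately gives the second equality in~\eqref{eq:cycle-product-expl}.

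For the first equality, I would compute directly:
\[
Q_\tau \tilde{Q} = \begin{pmatrix} l-\tau n & -m \\ k & -l-\tau n \end{pmatrix} \begin{pmatrix} \tilde{l}+\tilde{n} & -\tilde{m} \\ \tilde{k} & -\tilde{l}+\tilde{n} \end{pmatrix},
\]
read off the diagonal entries, and check that $-\tr(Q_\tau \tilde{Q})$ simplifies precisely to $2\tau\tilde{n}n - 2\tilde{l}l + \tilde{k}m + \tilde{m}k$. The sign convention in defining $Q_\tau$ (with the $-\tau n$ replacing $n$ on the diagonal) is arranged exactly so that this identity holds with the correct signs. There is no real obstacle: the only point requiring minor care is bookkeeping the signs in the trace expansion so that the $\tau$-term appears with coefficient $+2$ rather than $-2$, and that the off-diagonal contributions $\tilde{k}m + \tilde{m}k$ come out symmetrically.
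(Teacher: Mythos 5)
Your proposal is correct and follows exactly the route the paper intends: the corollary is an immediate consequence of Lemma~\ref{le:sl2-invariant-pairing} via the identification of the matrix \(Q\) with the vector \((n,l,k,m)\), and the trace identity \(-\tr(Q_\tau\tilde{Q})=2\tau\tilde{n}n-2\tilde{l}l+\tilde{k}m+\tilde{m}k\) is a direct computation that does check out with the stated signs. The paper gives no separate proof for this corollary, so there is nothing further to compare.
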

Note that we can explicitly write \(Q_\tau\) for \(Q=
\begin{pmatrix}
  a&b\\c&d
\end{pmatrix}\) as follows:
\begin{displaymath}
  Q_e=\begin{pmatrix}
    a&b\\c&d
  \end{pmatrix},\quad
  Q_p=\begin{pmatrix}
    \frac{1}{2}(a-d)&b\\c&-\frac{1}{2}(a-d)
  \end{pmatrix},\quad
  Q_h=\begin{pmatrix}
    -d&b\\c&-a
  \end{pmatrix}.
\end{displaymath}
In particular, \(Q_h=-Q^{-1}\) and
\(Q_p=\frac{1}{2}(Q_e+Q_h)\). Furthermore, \(Q_p\) has the same
structure as \(\cycle{}{xy}\).  Now, we are ready to extend the
projective line \(P\Space{R}{1}\) to two dimensions using the analogy
with properties of cycles \(\cycle{}{xy}\).
\begin{defn}
  \label{de:product-cycles}
  \begin{enumerate}
  \item Two bilinear forms \(Q\) and \(\tilde{Q}\) are
    \(\tau\)-\emph{orthogonal} if \(\scalar[\tau]{Q}{\tilde{Q}}=0\). 
  \item \label{it:isotropic}
    A form is \(\tau\)-\emph{isotropic} if it is
    \(\tau\)-orthogonal to itself.
  \end{enumerate}
\end{defn}
If a form \( Q= \begin{pmatrix}
    l+n&-m\\k&-l+n
  \end{pmatrix}
\) has \(k\neq 0\) then we can scale it  to obtain \(k=1\), this form
of \(Q\)
is called \emph{normalised}. A normalised \(\tau\)-isotropic form is
completely determined by its diagonal values: \(\begin{pmatrix}
  u+v&-u^2+\tau v^2\\1&-u+v
\end{pmatrix}\). Thus, the set of such forms is in a one-to-one
correspondence with points of \(\Space{R}{2}\). Finally, a form \(
Q= \begin{pmatrix} l+n&-m\\k&-l+n
  \end{pmatrix}\) is e-orthogonal to the \(\tau\)-isotropic form \(\begin{pmatrix}
  u+v&-u^2+\tau v^2\\1&-u+v
\end{pmatrix}\) if:
\begin{equation}
  \label{eq:quadratics-equation}
  k(u^2-\tau v^2)-2lu-2nv+m=0,
\end{equation}
that is the point \((u,v)\in\Space{R}{2}\) belongs to the quadratic curve
with coefficients \((k,l,n,m)\).

\section{Homogeneous spaces of cycles}
\label{sec:homog-spac-cycl}
Obviously, the group \(\SL\) acts on \(P\Space{R}{1}\) transitively,
in fact it is even \(3\)-transitive in the following sense. We say
that a triple \(\{x_1,x_2,x_3\}\subset P\Space{R}{1}\) of distinct
points is \emph{positively oriented} if
\begin{displaymath}
  \text{ either }\quad x_1<x_2<x_3, \quad\text{ or }\quad  x_3<x_1< x_2,
\end{displaymath}
where we agree that the ideal point \(\infty\in P\Space{R}{1}\) is
greater than  any \(x\in\Space{R}{}\). Equivalently, 
a triple \(\{x_1,x_2,x_3\}\) of reals is positively oriented if:
\begin{displaymath}
  (x_1-x_2)(x_2-x_3)(x_3-x_1)>0.
\end{displaymath}
Also, a triple of distinct points, which is not positively oriented, is
\emph{negatively oriented}. 
A simple calculation based on the resolvent-type identity:
\begin{displaymath}
  \frac{ax+b}{cx+d}-\frac{ay+b}{cy+d}=\frac{(x-y)(ad-bc)}{(cx+b)(cy+d)}
\end{displaymath}
shows that both the positive and negative orientations of triples are
\(\SL\)-invariant. On the other hand, the reflection \(x\mapsto -x\)
swaps orientations of triples. Note, that the reflection is a Moebius
transformation associated to the cycle
\begin{equation}
  \label{eq:matrix-J-defn}
 \cycle{}{0\infty}=\begin{pmatrix}
  1&0\\0&-1
\end{pmatrix}, \quad \text{ with } \det \cycle{}{0\infty}=-1.
\end{equation}

A significant amount of information about M\"obius transformations
follows from the fact, that any continuous one-parametric subgroup of
\(\SL\) is conjugated to one of the three following
subgroups\footnote{A reader may know that \(A\), \(N\) and \(K\) are
  factors in the Iwasawa decomposition \(\SL=ANK\)
  (cf. Cor.~\ref{co:Iwasawa-decomp}, however this important result
  does not play any r\^ole in our consideration.}:
\begin{equation}
  \label{eq:ank-subgroup}
  A=\left\{\begin{pmatrix} e^{-t} & 0\\0&e^{t}
    \end{pmatrix}\right\},\quad
  N=\left\{\begin{pmatrix} 1&t \\0&1
    \end{pmatrix}\right\},\quad
  K=\left\{ \begin{pmatrix}
      \cos t &  -\sin t\\
      \sin t & \cos t
    \end{pmatrix}\right\},
\end{equation}
where \(t\in\Space{R}{}\). Also, it is useful to introduce
subgroups \(\Aprime\) and \(N'\) conjugated to \(A\) and \(N\)
respectively:
\begin{equation}
  \label{eq:a1n1-subgroup}
  \Aprime=\left\{\begin{pmatrix} \cosh t & \sinh t\\ \sinh t & \cosh t 
    \end{pmatrix} \such t\in \Space{R}{}\right\},\qquad
  N'=\left\{\begin{pmatrix} 1&0 \\t&1
    \end{pmatrix} \such t\in \Space{R}{}\right\}.
\end{equation}
Thereafter, all three one-parameter subgroups \(\Aprime\), \(N'\)
and \(K\) consist of all matrices with the universal structure
\begin{equation}
  \label{eq:a1n1k-universal}
\begin{pmatrix}
  a&\tau b\\b &a
\end{pmatrix}
\quad  \text{where }\tau =1,\, 0,\, -1 \text{ for }\Aprime,\, N'\,
\text{ and }
K \text{ respectively}.
\end{equation}
We use the notation \(H_\tau\) for these subgroups. Again, any
continuous one-di\-men\-sio\-nal
subgroup of \(\SL\) is conjugated to
\(H_\tau\) for an appropriate \(\tau\).

We note, that matrices from \(A\), \(N\) and \(K\) with \(t\neq 0\)
have two, one and none different real eigenvalues
respectively. Eigenvectors in \(\Space{R}{2}\) correspond to fixed
points of M\"obius transformations on \(P\Space{R}{1}\). Clearly, the
number of eigenvectors (and thus fixed points) is limited by the
dimensionality of the space, that is two. For this reason, if \(g_1\)
and \(g_2\) take equal values on three different points of
\(P\Space{R}{1}\), then \(g_1=g_2\).

Also, eigenvectors provide an effective classification tool:
\(g\in\SL\) belongs to a one-dimensional continuous subgroup conjugated
to \(A\), \(N\) or \(K\) if and only if the characteristic polynomial
\(\det(g-\lambda I)\) has two, one and none different real root(s)
respectively. We will illustrate an application of fixed points
techniques through the following well-known result, which will be used
later.
\begin{lem}
  \label{le:three-transitive}
  Let \(\{x_1,x_2,x_3\}\) and \(\{y_1,y_2,y_3\}\) be positively
  oriented triples of points in \(\dSpace{R}{}\). Then, there is a
  unique (computable!) M\"obius map \(\phi\in\SL\) with \(\phi(x_j)=y_j\) for
  \(j=1\), \(2\), \(3\).
\end{lem}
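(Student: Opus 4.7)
The plan is to treat uniqueness and existence separately.

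Uniqueness is essentially already contained in the paragraph preceding the lemma. If $\phi_1,\phi_2\in\SL$ both send $x_j$ to $y_j$ for $j=1,2,3$, then $g:=\phi_2^{-1}\phi_1$ fixes three distinct points on $P\Space{R}{1}$. As already observed, a nontrivial matrix in $\SL$ has at most two eigenvectors in $\Space{R}{2}$ and hence at most two fixed points on $P\Space{R}{1}$, so $g$ must be the identity.

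For existence I would reduce both triples to a canonical one, showing that every positively oriented triple $\{x_1,x_2,x_3\}$ can be sent to $\{0,1,\infty\}$ by an element of $\SL$; the required map is then $\phi:=\phi_{\mathbf{y}}^{-1}\circ\phi_{\mathbf{x}}$. In the case of three finite reals, the explicit candidate is
\begin{equation*}
  \phi_{\mathbf{x}}\colon x\mapsto \frac{(x_2-x_3)(x-x_1)}{(x_2-x_1)(x-x_3)},
\end{equation*}
easily checked to send $x_1,x_2,x_3$ to $0,1,\infty$ respectively. Its representing matrix has determinant exactly $(x_1-x_2)(x_2-x_3)(x_3-x_1)$, which by the orientation characterisation stated just above the lemma is strictly positive. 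Dividing the matrix by the positive real square root of this determinant lands it in $\SL$. When one of the coordinates equals the ideal point $\infty$, the corresponding factor is omitted in the usual way, and the sign computation is parallel under the convention that $\infty$ exceeds every real.

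The only delicate point---and the main obstacle---is the coordination of signs between the ``positively oriented'' hypothesis and the requirement $\det=+1$. Without this matching one could only land in $\mathrm{GL}_2(\Space{R}{})$, and the two triples might occupy different $\SL$-orbits. The positive-orientation hypothesis on both triples is precisely what guarantees the two cross-ratio determinants are positive and hence admit real square roots, synchronising the normalisations so that the composite $\phi_{\mathbf{y}}^{-1}\circ\phi_{\mathbf{x}}$ lies in $\SL$ rather than in a larger group; the parenthetical ``computable!'' in the statement simply refers to the explicit formula thus obtained.
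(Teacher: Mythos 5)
Your argument is correct, but it is not the route the paper takes --- in fact it is precisely the route the paper names and declines. For uniqueness you invoke the fixed-point count (at most two eigenvectors, hence at most two fixed points), which is exactly the observation the paper records in the paragraph before the lemma, so that part coincides. For existence you reduce both triples to $\{0,1,\infty\}$ via the explicit cross-ratio map; your determinant computation is right: the representing matrix of $x\mapsto\frac{(x_2-x_3)(x-x_1)}{(x_2-x_1)(x-x_3)}$ has determinant $(x_1-x_2)(x_2-x_3)(x_3-x_1)$, which the positive-orientation hypothesis makes positive, so rescaling by its positive square root lands the map in $\SL$, and the composite $\phi_{\mathbf{y}}^{-1}\circ\phi_{\mathbf{x}}$ does the job. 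You have also correctly identified the one delicate point, namely that without the orientation hypothesis one only reaches $\mathrm{GL}_2(\Space{R}{})$. The paper, however, explicitly sets this classical computation aside (citing Beardon) and instead builds $\phi$ as a product $g'''g''g'$ of elements taken successively from $K$, $N$ and $A$, chosen in order of increasing number of fixed points: a rotation sends $x_3\mapsto\infty$, a translation fixes $\infty$ and sends $x_1\mapsto 0$, a dilation fixes $0,\infty$ and sends $x_2\mapsto 1$. The payoff of that choice is that the Iwasawa decomposition $\SL=ANK$ (Cor.~\ref{co:Iwasawa-decomp}) drops out as an immediate consequence of the \emph{proof}, which the paper points out explicitly; your cross-ratio argument, while shorter and fully explicit (and arguably a better justification of the word ``computable''), does not yield that corollary. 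The only soft spot in your write-up is the case where some $x_j$ or $y_j$ equals $\infty$, which you dispatch with ``the corresponding factor is omitted in the usual way''; that is standard and acceptable, but if you wanted parity with the paper's level of detail you would spell out the degenerate matrices there too.
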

\begin{proof}
  Often, the statement is quickly demonstrated through an explicit
  expression for \(\phi\), cf.~\amscite{Beardon05a}*{Thm.~13.2.1}. We
  will use properties of the subgroups \(A\), \(N\) and \(K\) to
  describe an algorithm to find such a map.  First, we notice that it
  is sufficient to show the Lemma for the particular case \(y_1=0\),
  \(y_2=1\), \(y_3=\infty\). The general case can be obtained from
  composition of two such maps. Another useful observation is that the
  fixed point for \(N\), that is \(\infty\), is also a fixed point of
  \(A\).

  Now, we will use subgroups \(K\), \(N\) and \(A\) in order of
  increasing number of their fixed points. First, for any \(x_3\) the
  matrix \(g'=\begin{pmatrix}
    \cos t &  \sin t\\
    -\sin t & \cos t
  \end{pmatrix}\in K\)  such that \(\cot t=x_3\)  maps \(x_3\) to
  \(y_3=\infty\). Let \(x_1'=g'x_1\) and \(x_2'=g'x_2\). Then the matrix
  \(g''=\begin{pmatrix}
    1 &  -x_1'\\
    0 & 1
  \end{pmatrix}\in N\), fixes \(\infty=g' x_3\) and sends \(x'_1\) to
  \(y_1=0\). Let \(x''_2=g''x_2'\), from positive orientation of
  triples we have \(0<x''_2<\infty\). Next, the matrix
  \(g'''=\begin{pmatrix}
    a^{-1} &  0\\
    0 & a
  \end{pmatrix}\in A\) with \(a=\sqrt{x''_2}\)  sends \(x''_2\) to \(1\)
  and fixes both
  \(\infty=g''g'x_3\) and \(0=g''g' x_1\). Thus, \(g=g'''g''g'\) makes
  the required transformation \((x_1,x_2,x_3)\mapsto(0,1,\infty)\).
\end{proof}
\begin{cor}
  Let \(\{x_1,x_2,x_3\}\) and \(\{y_1,y_2,y_3\}\) be two triples with the
  opposite orientations. Then, there is a
  unique M\"obius map \(\phi\in\SL\) with \(\phi\circ \cycle{}{0\infty}(x_j)=y_j\) for
  \(j=1\), \(2\), \(3\).
\end{cor}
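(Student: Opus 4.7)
The plan is to reduce this corollary to Lemma~\ref{le:three-transitive} by pre-composing with the orientation-reversing map $\cycle{}{0\infty}$. The excerpt already established two ingredients I will invoke directly: the M\"obius transformation associated to $\cycle{}{0\infty}=\begin{pmatrix}1&0\\0&-1\end{pmatrix}$ is the reflection $x\mapsto -x$, and this reflection swaps orientations of triples in $\dSpace{R}{}$.

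First I would observe that the triple $\{\cycle{}{0\infty}(x_1),\cycle{}{0\infty}(x_2),\cycle{}{0\infty}(x_3)\}=\{-x_1,-x_2,-x_3\}$ has the orientation opposite to that of $\{x_1,x_2,x_3\}$. Since $\{x_1,x_2,x_3\}$ and $\{y_1,y_2,y_3\}$ have opposite orientations by hypothesis, the triples $\{-x_1,-x_2,-x_3\}$ and $\{y_1,y_2,y_3\}$ have the \emph{same} orientation. Lemma~\ref{le:three-transitive} then produces a unique $\phi\in\SL$ sending $-x_j$ to $y_j$ for $j=1,2,3$; that is, $\phi\circ \cycle{}{0\infty}(x_j)=y_j$, giving existence.

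For uniqueness, suppose $\phi_1,\phi_2\in\SL$ both satisfy $\phi_i\circ \cycle{}{0\infty}(x_j)=y_j$ for $j=1,2,3$. Then both $\phi_1$ and $\phi_2$ send the positively (or negatively) oriented triple $\{-x_1,-x_2,-x_3\}$ to $\{y_1,y_2,y_3\}$, so the uniqueness clause of Lemma~\ref{le:three-transitive} forces $\phi_1=\phi_2$.

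There is no genuine obstacle here; the only point worth checking carefully is the elementary fact that the sign flip $x\mapsto -x$ reverses orientation of triples, which follows immediately from the identity $(-x_1+x_2)(-x_2+x_3)(-x_3+x_1)=-(x_1-x_2)(x_2-x_3)(x_3-x_1)$ and the characterisation of orientation given just before Lemma~\ref{le:three-transitive}. The cost of allowing triples of opposite orientation is thus exactly one extra application of the reflection cycle $\cycle{}{0\infty}$.
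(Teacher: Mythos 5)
Your argument is correct and is precisely the reduction the paper intends: the corollary is printed without proof as an immediate consequence of Lemma~\ref{le:three-transitive} combined with the observation, made just before it, that the reflection \(\cycle{}{0\infty}\colon x\mapsto -x\) is a M\"obius map that swaps orientations of triples. The one small point you gloss over is that after reflecting, the two triples share an orientation that may be \emph{negative}, while the lemma is stated only for positively oriented triples; this is repaired by transposing two indices simultaneously in both triples (a transposition flips the sign of \((x_1-x_2)(x_2-x_3)(x_3-x_1)\)), after which the lemma applies verbatim and yields both existence and uniqueness exactly as you describe.
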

We will denote by \(\phi_{XY}\) the unique map from
Lem.~\ref{le:three-transitive} defined by triples
\(X=\{x_1,x_2,x_3\}\) and \(Y=\{y_1,y_2,y_3\}\).  

Although we are not going to use it in this paper, we note that the
following important result~\amscite{Lang85}*{\S~III.1} is an immediate
consequence of our \emph{proof} of Lem.~\ref{le:three-transitive}.
\begin{cor}[Iwasawa decomposition]
  \label{co:Iwasawa-decomp}
  Any element of \(g\in\SL\) is a product \(g=g_A g_N g_K\),  where
  \(g_A\),  \(g_N\) and  \(g_K\) belong to subgroups \(A\), \(N\),
  \(K\) respectively and those factors are uniquely defined.
\end{cor}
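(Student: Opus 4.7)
My plan is to extract the decomposition directly from the constructive proof of Lem.~\ref{le:three-transitive}, as the wording of the corollary suggests.

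\emph{Existence.} Given an arbitrary \(g\in\SL\), I would form the triple \(X=g^{-1}\{0,1,\infty\}\). Since the orientation of triples is \(\SL\)-invariant (established just before the statement of Lem.~\ref{le:three-transitive}) and \((0,1,\infty)\) is positively oriented, so is \(X\). The proof of Lem.~\ref{le:three-transitive} then literally constructs an element \(\phi=g'''g''g'\in A\cdot N\cdot K\) mapping \(X\) to \((0,1,\infty)\). Since \(g\) itself has the same effect on \(X\), the uniqueness clause of Lem.~\ref{le:three-transitive} forces \(g=\phi\), which is exactly the asserted factorisation \(g=g_A g_N g_K\).

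\emph{Uniqueness.} For a hypothetical second factorisation \(g=g'_A g'_N g'_K\), I would trace the action on the three distinguished points of \(X\) in the same \(K\to N\to A\) order used in the construction. Since every element of \(A\) and of \(N\) fixes \(\infty\), the identity \(g(x_3)=\infty\) forces \(g'_K(x_3)=\infty\); this singles out \(g'_K\in K\) up to sign through the equation \(\cot t=x_3\). The sign is then pinned down by demanding that the residual factor \(g\,(g'_K)^{-1}(g'_N)^{-1}\) lie in \(A\), whose defining condition involves positive diagonal entries (in particular \(-I\notin A\)). Once \(g'_K\) is fixed, the fact that \(A\) fixes \(0\) makes \(g'_N\) the unique translation sending \(g'_K(x_1)\) to \(0\), and then \(g'_A\) is the unique positive-diagonal element scaling \(g'_N g'_K(x_2)\) to \(1\). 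Matching against the construction from the existence part yields \(g'_A=g_A\), \(g'_N=g_N\), \(g'_K=g_K\).

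The delicate point is the sign ambiguity in selecting \(g_K\): at the level of M\"obius transformations on \(P\Space{R}{1}\) the two candidates \(\pm g_K\) are indistinguishable, and uniqueness at the matrix level is only recovered because \(A\) is the connected group of positive diagonal matrices rather than the full diagonal subgroup. Everything else is routine transcription of the three-step algorithm from the proof of Lem.~\ref{le:three-transitive}.
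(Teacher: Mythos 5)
Your proposal follows exactly the route the paper intends: Cor.~\ref{co:Iwasawa-decomp} is presented there as an ``immediate consequence of our \emph{proof}'' of Lem.~\ref{le:three-transitive} with no further argument supplied, and your existence step is precisely that extraction (apply the $K\to N\to A$ construction to $X=g^{-1}\{0,1,\infty\}$, whose positive orientation follows from $\SL$-invariance). You additionally work out the uniqueness of the factors --- including the $\pm I$ ambiguity in $K$ being resolved by the positivity of the diagonal in $A$ and $N$ --- which the paper leaves entirely implicit; this is a completion of the same argument rather than a different approach, and it is correct.
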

In particular, we note that it is not a coincidence that the subgroups
appear in the Iwasawa decomposition \(\SL=ANK\) in order of decreasing
number of their fixed points.

\section{Triples of intervals}
\label{sec:triples-intervals}
We change our point of view and instead of two ordered triples of points
consider three ordered pairs, that is---three intervals. For them we
will need the following definition.
\begin{defn}
  \label{de:aligned-triples-intervals}
  We say that a triple of intervals \(\{[x_1,y_1], [x_2,y_2],
  [x_3,y_3]\}\)  is \emph{aligned} if the triples \(X=\{x_1,x_2,x_3\}\)
  and \(Y=\{y_1,y_2,y_3\}\)  of their endpoints have the same orientation. 
\end{defn}
Aligned triples determine certain one-parameter subgroups of M\"obius
transformations as follows:
\begin{prop}
  \label{pr:triple-subgroup}
  Let \(\{[x_1,y_1], [x_2,y_2], [x_3,y_3]\}\) be an aligned triple of
  intervals.
  \begin{enumerate}
  \item If \(\phi_{XY}\) has at most one fixed point, then there is a
    unique (up to a parametrisation) 
    one-parameter semigroup of
    M\"obius map \(\psi(t)\subset\SL\), which maps \([x_1,y_1]\) to
    \([x_2,y_2]\) and \([x_3,y_3]\):
    \begin{displaymath}
      \psi(t_j)(x_1)=x_j,\quad \psi(t_j)(y_1)=y_j,\qquad \text{ for
        some } t_j\in\Space{R}{}  \text{ and } j=2,3.
    \end{displaymath}
  \item Let \(\phi_{XY}\) have two fixed points \(x<y\) and \(\cycle{}{xy}\)
    be the orientation inverting M\"obius transformation with the
    matrix~\eqref{eq:matrix-for-cycle-defn}. For \(j=1\), 
    \(2\), \(3\), we define:
    \begin{align*}
      x'_j&=x_j,& y'_j&=y_j,& x''_j&=\cycle{}{xy}x_j,&
      y''_j&=\cycle{}{xy}y_j&\text{ if } x<x_j<y;\\
      x'_j&=\cycle{}{xy}x_j,& y'_j&=\cycle{}{xy}y_j,& x''_j&=x_j,& y''_j&=y_j,&
      \text{ otherwise}.
    \end{align*}
    Then, there is a 
    one-parameter
    semigroup of M\"obius map \(\psi(t)\subset\SL\), and \(t_2\),
    \(t_3\in\Space{R}{}\) such that:
    \begin{displaymath}
      \psi(t_j)(x'_1)=x'_j,\quad \psi(t_j)(x''_1)=x''_j,\quad
      \psi(t_j)(y'_1)=y'_j,\quad  \psi(t_j)(y''_1)=y''_j,
    \end{displaymath}
    where \(j=2,3\).
  \end{enumerate}
\end{prop}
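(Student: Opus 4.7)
The plan is to identify $\psi$ with the unique connected one-parameter subgroup $H \subset \SL$ containing $\phi_{XY}$. Since any non-identity element of $\SL$ lies in a unique such subgroup---the connected component of its centraliser, determined by the number of its fixed points as discussed in Sec.~\ref{sec:homog-spac-cycl}---this $H$ is well-defined and accounts for the ``up to parametrisation'' uniqueness. The subgroup $H$ is abelian, so every $\psi(t) \in H$ commutes with $\phi_{XY}$; this commutativity is the engine of the proof.

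In case~(1), $\phi_{XY}$ is elliptic or parabolic, so $H$ has at most one fixed point on $P\Space{R}{1}$ and acts freely and transitively on the complement. Since $\phi_{XY}(x_1) = y_1 \neq x_1$, the point $x_1$ is not the fixed point of $\phi_{XY}$, and likewise for each $x_j$. Thus one can choose $\psi(t_j) \in H$ with $\psi(t_j)(x_1) = x_j$, and the required second identity follows from
\begin{displaymath}
\psi(t_j)(y_1) = \psi(t_j)\phi_{XY}(x_1) = \phi_{XY}\psi(t_j)(x_1) = \phi_{XY}(x_j) = y_j.
\end{displaymath}

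In case~(2), $H$ is hyperbolic, fixes $x, y$, and has two non-trivial orbits: the interval $(x,y)$ and $P\Space{R}{1}\setminus[x,y]$. Since $\phi_{XY} \in H$ preserves these orbits, the pair $(x_j, y_j)$ lies wholly in one of them. A short computation from~\eqref{eq:matrix-for-cycle-defn} gives $\cycle{}{xy}^2 = \frac{(x-y)^2}{4} I$, so $\cycle{}{xy}$ acts as an involution on $P\Space{R}{1}$ fixing $x, y$; conjugating to the model $x = 0$, $y = \infty$ (where the hyperbolic subgroup becomes $A$ and $\cycle{}{0\infty}$ becomes $z \mapsto -z$) shows that $\cycle{}{xy}$ commutes with all of $H$, and in particular with $\phi_{XY}$. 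By definition $x'_j, y'_j \in (x,y)$ and $x''_j, y''_j$ lie in the complement, with $x''_j = \cycle{}{xy}(x'_j)$; and a case analysis on whether $x_j \in (x,y)$, combined with the commutativity of $\cycle{}{xy}$ and $\phi_{XY}$, yields $\phi_{XY}(x'_j) = y'_j$ in both sub-cases. Choosing $\psi(t_j) \in H$ with $\psi(t_j)(x'_1) = x'_j$ (possible by transitivity of $H$ on $(x,y)$), the remaining three identities follow by commuting $\psi(t_j)$ with $\phi_{XY}$ and $\cycle{}{xy}$ exactly as in case~(1).

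The main obstacle lies in case~(2): one must verify that the primed/double-primed sorting is self-consistent---relying on the hyperbolic $\phi_{XY}$ preserving each orbit of $H$---and that a single parameter $t_j$ realises all four identities at once, which in turn rests on the commutation of both $\phi_{XY}$ and $\cycle{}{xy}$ with every element of $H$.
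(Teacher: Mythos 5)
Your existence argument coincides with the paper's own proof step for step: take the one-parameter subgroup \(\psi(t)\) whose identity component contains \(\phi_{XY}\) (the paper fixes \(\psi(1)=\phi_{XY}\)), move \(x_1\) (resp.\ \(x'_1\)) along its orbit to \(x_j\) (resp.\ \(x'_j\)), and recover \(y_j\), and in case~(2) also \(x''_j\), \(y''_j\), from the commutation of \(\psi(t)\) with \(\phi_{XY}\) and with the involution \(\cycle{}{xy}\). Your observations that \(\cycle{}{xy}^2=\frac{(x-y)^2}{4}I\) and that \(\cycle{}{xy}\) commutes with the whole hyperbolic subgroup are correct and make explicit what the paper only asserts. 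That part stands.

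The one genuine gap is in the uniqueness claim of part~(1). You derive uniqueness from the fact that \(\phi_{XY}\) lies in a unique connected one-parameter subgroup, namely the identity component of its centraliser. But the proposition asserts uniqueness among \emph{all} one-parameter semigroups \(\psi'\) satisfying \(\psi'(t_j)(x_1)=x_j\), \(\psi'(t_j)(y_1)=y_j\) for \(j=2,3\). Each such element \(\psi'(t_j)\) is constrained at only two points of \(P\Space{R}{1}\), so nothing forces the map \(\phi_{XY}\) --- which is pinned down by three point-correspondences, cf.\ Lem.~\ref{le:three-transitive} --- to belong to \(\psi'\); your centraliser argument only excludes competitors that already contain \(\phi_{XY}\). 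The paper closes precisely this hole with Lem.~\ref{le:uniqueness-orbit}: the orbits of the cycle \(\cycle{}{x_1y_1}\) under \(\psi\) and under any competitor \(\psi'\) in the homogeneous space \(\SL/\Aprime\) of cycles both pass through the three points \(\cycle{}{x_1y_1}\), \(\cycle{}{x_2y_2}\), \(\cycle{}{x_3y_3}\), and two orbits of one-parameter subgroups (conic sections in the relevant half-plane model) sharing three points must coincide. You need this three-common-points argument, or an equivalent, to complete the uniqueness part.
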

\begin{proof}
  Consider the one-parameter subgroup of
  \(\psi(t)\subset \SL\) such that \(\phi_{XY}=\psi(1)\). Note, that
  \(\psi(t)\) and \(\phi_{XY}\) have the same fixed points (if any) and no
  point \(x_j\) is fixed since \(x_j\neq y_j\). If the number of fixed
  points is less than \(2\), then \(\psi(t)x_1\), \(t\in\Space{R}{}\)
  produces the entire real line except a possible single fixed
  point. Therefore, there are \(t_2\) and \(t_3\) such that
  \(\psi(t_2)x_1=x_2\) and \(\psi(t_3)x_1=x_3\). Since \(\psi(t)\) and
  \(\phi_{XY}\) commute for all \(t\) we also have:
  \begin{displaymath}
    \psi(t_j)y_1=\psi(t_j)\phi_{XY} x_1=\phi_{XY}\psi(t_j) x_1=\phi_{XY}
    x_j=y_j,\quad \text{ for } j=2,3.
  \end{displaymath}

  If there are two fixed points \(x<y\), then the open interval
  \((x,y)\) is an orbit for the subgroup \(\psi(t)\). Since all
  \(x'_1\), \(x'_2\) and \(x'_3\) belong to this orbit and \(\cycle{}{xy}\)
  commutes with \(\phi_{XY}\) we may repeat the above reasoning for
  the dashed intervals \([x'_j,y'_j]\). Finally, \(x''_j=\cycle{}{xy}x'_j\)
  and \(y''_j=\cycle{}{xy}y'_j\), where \(\cycle{}{xy}\) commutes with \(\phi\)
  and \(\psi(t_j)\), \(j=2\), \(3\).  Uniqueness of the subgroup
  follows from Lemma~\ref{le:uniqueness-orbit}.
\end{proof}
The group \(\SL\) acts transitively on collection of all cycles
\(\cycle{}{xy}\), thus this is a \(\SL\)-homogeneous space. It is easy to
see that the fix-group of the cycle \(\cycle{}{-1,1}\) is
\(\Aprime\)~\eqref{eq:a1n1-subgroup}. Thus the homogeneous space of
cycles is isomorphic to \(\SL/\Aprime\). 
\begin{lem}
  \label{le:uniqueness-orbit}
  Let \(H\) be a one-parameter continuous subgroup of \(\SL\) and
  \(X=\SL/H\) be the corresponding homogeneous space. If two orbits
  of one-parameter continuous subgroups on \(X\) have at least three
  common points then these orbits coincide.
\end{lem}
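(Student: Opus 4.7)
The plan is to exploit the transitive action of $\SL$ on $X$ to bring both orbits through a common point, and then to reduce the question to a containment of one continuous one-parameter subgroup inside a two-dimensional subvariety of $\SL$. Let $p, q, r$ be three distinct points of $O_1 \cap O_2$. Since $\SL$ acts transitively on $X$, I may replace the base points of the two orbits with $p$ and write $O_1 = H_1 \cdot p$, $O_2 = H_2 \cdot p$, where $H_1, H_2 \subset \SL$ are the two given continuous one-parameter subgroups. Denote by $G_p := \mathrm{Stab}_{\SL}(p)$ the stabiliser, itself a continuous one-parameter subgroup conjugate to $H$.

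Pick witnesses $a, a' \in H_1$ and $b, b' \in H_2$ satisfying $ap = bp = q$ and $a'p = b'p = r$. Since $q, r \neq p$, the elements $b^{-1}a$ and $(b')^{-1}a'$ lie in $G_p$; equivalently, $a, a' \in H_2 G_p$. The identity $O_1 = O_2$ is equivalent to $H_1 G_p = H_2 G_p$ as subsets of $\SL$, so it suffices to prove the inclusion $H_1 \subseteq H_2 G_p$ (the reverse inclusion then follows by symmetry).

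The key step is to show that the one-parameter subgroup $H_1 = \{\exp(t X_1) : t \in \mathbb{R}\}$ meeting the two-dimensional algebraic subvariety $H_2 G_p \subset \SL$ in the three elements $e, a, a'$ must lie entirely inside it. The matrix entries of $\exp(t X_1)$ are of low ``degree'' in $t$ -- a polynomial of degree at most two when $X_1$ is nilpotent, or a real-linear combination of $1, \cosh t, \sinh t$ (respectively $1, \cos t, \sin t$) when $X_1$ is semisimple -- while $H_2 G_p$ is cut out locally in $\SL$ by a single polynomial equation in the entries. Pulling that equation back along $\exp(t X_1)$ yields a function of $t$ whose zero set is either all of $\mathbb{R}$ or at most two points; three zeros therefore force identical vanishing, hence $H_1 \subseteq H_2 G_p$.

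The main obstacle is the verification of the degree/coefficient bound for all combinations of conjugacy types of $H_1$, $H_2$, and $G_p$ -- for instance, the case $H_2$ conjugate to $K$ and $G_p$ conjugate to $A$ should be checked in detail to confirm that the pull-back of the defining equation of $H_2 G_p$ along $\exp(t X_1)$ really has the stated three-parameter shape. An attractive geometric alternative, which I would prefer should the algebraic verification prove tedious, is to interpret each orbit as a ``generalised cycle'' in the Cayley--Klein geometry on $X$ supplied by the invariant pairing $\scalar[\tau]{\cdot}{\cdot}$ of Section~\ref{sec:cycles}, and to invoke the classical projective fact that three points on a cycle determine it uniquely, thereby giving $O_1 = O_2$ directly.
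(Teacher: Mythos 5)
Your reduction to the containment \(H_1\subseteq H_2G_p\) is sound, but the degree count that is supposed to finish the argument fails in the semisimple cases, and you have put your finger on exactly the spot where it breaks. The set \(H_2G_p\) is the preimage of the orbit \(O_2\) under \(g\mapsto g\cdot p\), and \(O_2\) is a conic in the half-plane model of \(X\); pulling its equation back to \(\SL\) gives a polynomial that is \emph{quadratic}, not linear, in the matrix entries. For instance, with \(X=\SL/K\), \(p=i\) and \(H_2=A\), the set \(AK\) is cut out by \(ac+bd=0\), and one checks that no linear equation in \(a,b,c,d\) defines it. Restricting a quadratic in the entries to \(\exp(tX_1)\) with \(X_1\) elliptic or hyperbolic yields a combination of \(1,\cos t,\sin t,\cos 2t,\sin 2t\) (respectively \(1,e^{\pm t},e^{\pm 2t}\)), which can have four zeros --- this is just the B\'ezout bound for a pair of conics, and two general conics really do meet in four points. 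Three common points therefore do not force identical vanishing by degree considerations alone. What saves the statement is that both orbits are conics of the same restricted family (circles, parabolas with vertical axis, or equilateral hyperbolas with vertical axis), so that subtracting their two equations kills the quadratic terms and reduces the intersection to a line meeting a conic, i.e.\ to a single quadratic equation with at most two roots.

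That missing geometric input is precisely the content of the paper's proof, and it coincides with the ``alternative'' you mention at the end but do not carry out: identify \(X=\SL/H\) with the upper half-plane in complex, dual or double numbers according to the conjugacy class of \(H\), observe that orbits of one-parameter subgroups are circles, vertical-axis parabolas, or vertical-axis equilateral hyperbolas respectively, and conclude that two distinct such curves share at most two points. If you prefer to keep the group-theoretic framing, you must still prove that the pullback of the defining equation of \(H_2G_p\) to \(H_1\) has at most two zeros unless it vanishes identically, and that requires using the special form of \emph{both} conics, not merely the parametrisation of \(H_1\).
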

\begin{proof}
  Since \(H\) is conjugated either to \(\Aprime\), \(N'\) or \(K\),
  the homogeneous space \(X=\SL/H\) is isomorphic to the upper
  half-plane in double, dual or complex
  numbers~\amscite{Kisil12a}*{\S~3.3.4}.  Orbits of one-parameter
  continuous subgroups in \(X\) are conic sections, which are circles,
  parabolas (with vertical axis) or equilateral hyperbolas (with
  vertical axis) for the respective type of geometry. Any two
  different orbits of the same type intersect at most at two points,
  since an analytic solution reduces to a quadratic equation. 
\end{proof}
Alternatively, we can reformulate Prop.~\ref{pr:triple-subgroup} as
follows: three different cycles \(\cycle{}{x_1y_1}\), \(\cycle{}{x_2y_2}\),
\(\cycle{}{x_3y_3}\) define a one-parameter subgroup, which generate either one
orbit or two related orbits passing the three cycles.

We have seen that the number of fixed points is the key
characteristics for the map \(\phi_{XY}\). The next result gives an
explicit expression for it.
\begin{prop} 
\label{pr:type-of-subgroup}
  The map \(\phi_{XY}\) has zero, one or two fixed points
  if the expression
  \begin{equation}
    \label{eq:discriminant}
    \det
    \begin{pmatrix}
      1&x_1y_1&y_1-x_1\\
      1&x_2y_2&y_2-x_2\\
      1&x_3y_3&y_3-x_3
    \end{pmatrix}^2-4\det\begin{pmatrix}
      x_1&1&y_1\\
      x_2&1&y_2\\
      x_3&1&y_3
    \end{pmatrix}
    \cdot\det
    \begin{pmatrix}
      x_1&-x_1y_1&y_1\\
      x_2&-x_2y_2&y_2\\
      x_3&-x_3y_3&y_3
    \end{pmatrix}
  \end{equation}
  is negative, zero or positive respectively.
\end{prop}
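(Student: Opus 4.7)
The plan is to reduce the count of fixed points to the sign of a classical discriminant. Represent any non-zero scalar multiple of \(\phi_{XY}\) by the matrix \(g=\begin{pmatrix}a&b\\c&d\end{pmatrix}\). The fixed points of \(\phi_{XY}\) on \(P\Space{R}{1}\) are the projective roots of \(cx^{2}+(d-a)x-b=0\), with the convention that \(\infty\) is fixed precisely when \(c=0\). A short case analysis separating \(c\neq 0\) from \(c=0\) shows that the number of fixed points is \(0\), \(1\) or \(2\) exactly when the discriminant
\begin{displaymath}
\Delta=(a-d)^{2}+4bc=(\tr g)^{2}-4\det g
\end{displaymath}
is negative, zero or positive respectively; the scaling identity \(\Delta(\lambda g)=\lambda^{2}\Delta(g)\) guarantees that the sign of \(\Delta\) depends only on the projective class of \(g\) and so is intrinsically attached to \(\phi_{XY}\).

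The next step is to read off the entries of \(g\) from the data \((x_{j},y_{j})\). The three conditions \(\phi(x_{j})=y_{j}\) become the homogeneous linear system
\begin{displaymath}
a\,x_{j}+b-c\,x_{j}y_{j}-d\,y_{j}=0,\qquad j=1,2,3,
\end{displaymath}
whose coefficient matrix is \(3\times 4\). By Lem.~\ref{le:three-transitive} its solution space is one-dimensional, and Cramer's rule represents \((a,b,c,d)\) as the four signed \(3\times 3\) minors obtained by deleting one column at a time. Factoring the minus signs out of the columns \(-x_{j}y_{j}\) and \(-y_{j}\), and permuting columns where necessary, identifies these minors with the determinants appearing in \eqref{eq:discriminant}: one finds \(c=-\det M_{2}\), \(b=\det M_{3}\), and, after splitting the third column of \(M_{1}\) via multilinearity as a column \(y_{j}\) minus a column \(x_{j}\), also \(a-d=\det M_{1}\), where \(M_{1}\), \(M_{2}\), \(M_{3}\) denote the three matrices of \eqref{eq:discriminant} in order of appearance.

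Substituting these three expressions into \(\Delta=(a-d)^{2}+4bc\) yields exactly the quantity~\eqref{eq:discriminant}, and the reduction carried out in the first step translates its sign into the statement of the proposition. The main obstacle is not conceptual but bookkeeping: one has to track signs carefully through the cofactor expansion and the column swaps so that the minors emerge in the precise layout of~\eqref{eq:discriminant}, not with a global sign flip that would corrupt the \(4bc\) cross term. A minor secondary issue is the uniform treatment of \(c=0\) in Step~1, which is resolved by noting that \(\Delta=(a-d)^{2}\ge 0\) already encodes the affine cases (two fixed points when \(a\neq d\), a single fixed point at \(\infty\) when \(a=d\)).
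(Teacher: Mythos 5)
Your argument is correct and follows essentially the paper's own route: the paper adjoins the fixed-point condition as a fourth row \((s,1,-s^2,-s)\) to the same homogeneous system and expands the resulting \(4\times 4\) determinant along that row, which is exactly your substitution of the Cramer's-rule minors \(a-d=\det M_1\), \(c=-\det M_2\), \(b=\det M_3\) into the quadratic \(cs^2+(d-a)s-b=0\) whose discriminant is \eqref{eq:discriminant}. The only cosmetic difference is that you package the root count as the sign of \((\tr g)^2-4\det g\), while the paper keeps the quadratic in \(s\) explicit and reads off its discriminant directly.
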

\begin{proof}
  If a M\"obius transformation \(\begin{pmatrix}
    a&b\\c&d
  \end{pmatrix}\) maps \(x_1\mapsto y_1\),  \(x_2\mapsto y_2\),
  \(x_3\mapsto y_3\) and  \(s\mapsto s\), then we have a homogeneous linear
  system, cf.~\amscite{Beardon05a}*{Ex.~13.2.4}: 
  \begin{equation}
    \label{eq:three-pairs-fixed-point}
    \begin{pmatrix}
    x_1&1&-x_1y_1&-y_1\\
    x_2&1&-x_2y_2&-y_2\\
    x_3&1&-x_3y_3&-y_3\\
    s& 1 &-s^2&-s
  \end{pmatrix}
  \begin{pmatrix}
    a\\b\\c\\d
  \end{pmatrix}=
  \begin{pmatrix}
    0\\0\\0\\0
  \end{pmatrix}.
  \end{equation}
  A non-zero solution exists if the determinant of the \(4\times 4\)
  matrix is zero. Expanding it over the last row and 
  rearranging terms we obtain the quadratic equation for the fixed point
  \(s\):
  \begin{displaymath}
    s^2 \det
    \begin{pmatrix}
      x_1&1&y_1\\
      x_2&1&y_2\\
      x_3&1&y_3
    \end{pmatrix}
    +s\det
    \begin{pmatrix}
      1&x_1y_1&y_1-x_1\\
      1&x_2y_2&y_2-x_2\\
      1&x_3y_3&y_3-x_3
    \end{pmatrix}
    +\det
    \begin{pmatrix}
      x_1&-x_1y_1&y_1\\
      x_2&-x_2y_2&y_2\\
      x_3&-x_3y_3&y_3
    \end{pmatrix}
    =0.
  \end{displaymath}
  The value~\eqref{eq:discriminant} is the discriminant of this equation.
\end{proof}
\begin{rem}
  It is interesting to note, that the relation \(ax+b-cxy-dy=0\) used
  in~\eqref{eq:three-pairs-fixed-point} 
  can be stated as e-orthogonality of the cycle \(
  \begin{pmatrix}
    a&b\\c&d
  \end{pmatrix}\) and the isotropic bilinear form \(
  \begin{pmatrix}
    x&-xy\\1&-y
  \end{pmatrix}\).
\end{rem}
 If \(y=g_0\cdot x\) for some \(g_0\in
H_\tau \), then for any \(g\in H_\tau\) we also have
\(y_g=g_0\cdot x_g\), where \(x_g=g\cdot x_g\) and \(y_g=g\cdot
y_g\). Thus, we demonstrated the first part of the following result.
\begin{lem}
  Let \(\tau =1\), \(0\) or \(-1\) and a real constant \(t\neq 0\) be such that
    \(1-\tau t^2>0\).
  \begin{enumerate} 
  \item The collections of intervals:
    \begin{equation}
      \label{eq:tau-invariant-cycles}
      I_{\tau,t} =\left\{[x,\textstyle \frac{x+\tau t}{tx+1}]\such x\in \Space{R}{} \right\}
    \end{equation}
    is preserved by the actions of subgroup \(H_\tau\). Any three
    different intervals from \(I_{\tau,t}\) define the subgroup
    \(H_\tau\) in the sense of Prop.~\ref{pr:triple-subgroup}.
  \item 
    All \(H_\tau\)-invariant bilinear forms compose the family \(P_{\tau,t}=\left\{
      \begin{pmatrix}
        a&\tau b\\
        b& a
      \end{pmatrix}
    \right\}\).
  \end{enumerate}
\end{lem}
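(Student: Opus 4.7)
The plan is to produce an explicit element \(g_0 \in H_\tau\) that realises the map \(x \mapsto \frac{x + \tau t}{tx + 1}\), and then read off both items from its properties. Concretely, take
\(g_0 = (1 - \tau t^2)^{-1/2}\begin{pmatrix} 1 & \tau t \\ t & 1 \end{pmatrix}\):
the hypothesis \(1 - \tau t^2 > 0\) makes the square root real, the normalisation places \(g_0\) in \(\SL\) without changing its M\"obius action, and the universal form~\eqref{eq:a1n1k-universal} places \(g_0\) in \(H_\tau\).

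For item~(i), invariance is the content of the paragraph preceding the lemma: each interval of \(I_{\tau, t}\) has the form \([x, g_0 x]\), and commutativity of \(H_\tau\) gives \(h \cdot [x, g_0 x] = [hx, g_0 hx] \in I_{\tau, t}\) for every \(h \in H_\tau\). For three distinct intervals \([x_j, y_j] \in I_{\tau, t}\), the \emph{same} \(g_0\) satisfies \(g_0 x_j = y_j\), so Lem.~\ref{le:three-transitive} forces the interpolant \(\phi_{XY} = g_0 \in H_\tau\). To match the conclusion of Prop.~\ref{pr:triple-subgroup}, pick \(h_j \in H_\tau\) with \(h_j x_1 = x_j\); commutativity then yields \(h_j y_1 = g_0 h_j x_1 = y_j\). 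The uniqueness supplied by Prop.~\ref{pr:triple-subgroup} (invoking Lem.~\ref{le:uniqueness-orbit} in case~(2)) identifies the associated subgroup with \(H_\tau\).

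For item~(ii), invariance under the similarity~\eqref{eq:sl2-similarity-2x2-matrices} reads \(h Q h^{-1} = Q\) for all \(h \in H_\tau\), i.e.\ \(Qh = hQ\). Because \(H_\tau\) is connected one-parameter, this is equivalent to the single infinitesimal condition \(X_\tau Q = Q X_\tau\) for the Lie generator \(X_\tau = \begin{pmatrix} 0 & \tau \\ 1 & 0 \end{pmatrix}\), obtained by differentiating~\eqref{eq:a1n1k-universal} at the identity. A direct \(2 \times 2\) commutator computation with \(Q = \begin{pmatrix} a & b \\ c & d \end{pmatrix}\) forces \(a = d\) and \(b = \tau c\), which is exactly the defining shape of \(P_{\tau, t}\); the converse is immediate from the same computation.

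The main subtlety is the hyperbolic case \(\tau = 1\) of item~(i): \(H_1 = \Aprime\) fixes \(\pm 1\) and splits \(P\Space{R}{1}\) into three disjoint orbits, so the auxiliary \(h_j\) may fail to exist when \(x_1\) and \(x_j\) lie in different orbits. This is precisely the scenario of case~(2) of Prop.~\ref{pr:triple-subgroup}: the orientation-reversing reflection cycle \(\cycle{}{-1, 1}\), which commutes with \(\Aprime\) and maps each unbounded orbit into the bounded one, supplies the primed and double-primed endpoints needed to restore the correspondence. Confirming that this pairing is consistent with the parametrisation of \(I_{1, t}\) is the step demanding the most care.
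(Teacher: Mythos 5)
Your proposal is correct and follows essentially the same route as the paper, which in fact leaves most of this implicit: the invariance of \(I_{\tau,t}\) is exactly the commutativity argument in the paragraph preceding the lemma, and your commutant computation with the generator \(\begin{pmatrix}0&\tau\\1&0\end{pmatrix}\) is just the \(2\times 2\) form of the paper's remark that \(P_{\tau,t}\) consists of the fixed vectors of the \(4\times 4\) matrix in~\eqref{eq:SL2-act-cycle-linear}. The hyperbolic subtlety you flag at the end does close by the very mechanism you name: \(\cycle{}{-1,1}\) commutes with both \(\Aprime\) and \(g_0\), so \(y'_j=g_0x'_j\) and \(y''_j=g_0x''_j\) hold consistently and case~(2) of Prop.~\ref{pr:triple-subgroup} applies verbatim.
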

The family \(P_{\tau,t}\) consists of the eigenvectors of the
\(4\times 4\) matrix from~\eqref{eq:SL2-act-cycle-linear} with
suitably substituted entries. There is (up to a factor) exactly one
\(\tau\)-isotropic form in \(P_{\tau,t}\), namely \(\begin{pmatrix}
  1&\tau \\
  1& 1
\end{pmatrix}\). We denote this form by \(\alli\). It corresponds to
the point \((0,1)\in\Space{R}{2}\) as discussed after
Defn.~\ref{de:product-cycles}. We may say that the subgroup
\(H_\tau\) fixes the point \(\alli\), this will play an important
r\^ole below. 

\section{Geometrisation of cycles}
\label{sec:geom-cycl}

We return to the geometric version of the Poincar\'e extension
considered in Sec.~\ref{sec:geom-constr} in terms of cycles. 
Cycles of the form \(\begin{pmatrix}
  x&-x^2\\1&-x
\end{pmatrix}\) are \(\tau\)-isotropic for any \(\tau\) and are
parametrised by the point \(x\) of the real line. For a fixed
\(\tau\), the collection of all \(\tau\)-isotropic cycles is a larger
set containing the image of the real line from the previous
sentence. Geometrisation of this embedding is described in the
following result.
\begin{lem}
  \begin{enumerate}
    \item The transformation \(x\mapsto \frac{x+\tau t}{tx+1}\) from the
    subgroup \(H_\tau\), which maps \(x\mapsto y\), corresponds to
    the value \(t=\frac{x-y}{xy-\tau}\).  
  \item The unique (up to a factor) bilinear form \(Q\) orthogonal
    to \(\cycle{}{xx}\), \(\cycle{}{yy}\) and \(\alli\) is
    \begin{displaymath}
      Q=\begin{pmatrix}
        \frac{1}{2}(x+y+xy-\tau)&-xy\\
        1&\frac{1}{2}(-x-y+xy-\tau)
      \end{pmatrix}.
    \end{displaymath}
  \item The defined above \(t\) and  \(Q\) are connected by the identity:
    \begin{equation}
      \label{eq:cosine-real-axis}
      \frac{\scalar[\tau]{Q}{\Space{R}{}}}{\sqrt{\modulus{\scalar[\tau]{Q}{Q}}}}
      =\frac{\tau}{\sqrt{\modulus{t^2-\tau}}}.
    \end{equation}
    Here, the real line is represented by the bilinear form \(\Space{R}{}=
    \begin{pmatrix}
     2^{-1/2}&0\\0&2^{-1/2} 
    \end{pmatrix}
    \) normalised such
    that \(\scalar[\tau]{\Space{R}{}}{\Space{R}{}}=\pm1\).
  \item For a cycle \(Q=
  \begin{pmatrix}
    l+n&-m\\k&-l+n
  \end{pmatrix}
  \), the value
  \(\frac{\scalar[e]{Q}{\Space{R}{}}}{\sqrt{\modulus{\scalar[e]{Q}{Q}}}}
  =-\frac{n}{\sqrt{\modulus{l^2+n^2-km}}}\)
  is equal to the cosine of the angle between the curve \(k(u^2+\tau
  v^2)-2lu-2nv+m=0\)~\eqref{eq:quadratics-equation} and the real line,
  cf.~\amscite{Kisil12a}*{Ex.~5.23}.
  \end{enumerate}
\end{lem}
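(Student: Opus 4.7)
The plan is to verify all four parts by direct computation in the $(n,l,k,m)$-coordinatisation of bilinear forms from Sec.~\ref{sec:extending-cycles}, using the explicit pairing formula~\eqref{eq:cycle-product-expl}. Part~(1) is pure algebra: solving $y=(x+\tau t)/(tx+1)$ for $t$ gives $t(xy-\tau)=x-y$, whence the claimed formula.

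For part~(2), I would first read off the $(n,l,k,m)$-coordinates of the three special cycles: $\cycle{}{xx}\leftrightarrow(0,x,1,x^2)$, $\cycle{}{yy}\leftrightarrow(0,y,1,y^2)$, and $\alli\leftrightarrow(1,0,1,-\tau)$. Writing the three $e$-orthogonality conditions $\scalar[e]{Q}{\cdot}=0$ — that is, formula~\eqref{eq:cycle-product-expl} with its parameter specialised to $-1$ — yields a homogeneous linear system for an unknown $Q=(n,l,k,m)$. Subtracting the $\cycle{}{xx}$- and $\cycle{}{yy}$-equations forces $l=k(x+y)/2$; substituting back forces $m=kxy$; the $\alli$-equation then forces $n=k(xy-\tau)/2$. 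Normalising $k=1$ reproduces the displayed matrix, and uniqueness up to scale follows because the coefficient vectors of the three conditions are linearly independent.

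For parts~(3) and~(4), I would substitute the explicit $Q$ just obtained, together with $\Space{R}{}\leftrightarrow(2^{-1/2},0,0,0)$, into~\eqref{eq:cycle-product-expl}. For~(3) this gives $\scalar[\tau]{Q}{\Space{R}{}}=\tau(xy-\tau)/\sqrt{2}$ and $\scalar[\tau]{Q}{Q}=[\tau(xy-\tau)^{2}-(x-y)^{2}]/2$; factoring $(xy-\tau)^{2}$ from the denominator and using $t=(x-y)/(xy-\tau)$ from part~(1) yields $\tau/\sqrt{|t^{2}-\tau|}$. For the algebraic half of~(4), setting the pairing parameter to $e=-1$ for generic $Q=(n,l,k,m)$ gives $\scalar[e]{Q}{\Space{R}{}}=-\sqrt{2}\,n$ and $\scalar[e]{Q}{Q}=-2(l^{2}+n^{2}-km)$, whose ratio is $-n/\sqrt{|l^{2}+n^{2}-km|}$. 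For the geometric half, implicit differentiation of~\eqref{eq:quadratics-equation} at an intersection $(u_{0},0)$ gives tangent slope $(ku_{0}-l)/n$ independently of $\tau$ — the $\tau v^{2}$ term disappears at $v=0$ — and $(ku_{0}-l)^{2}=l^{2}-km$ follows from $ku_{0}^{2}-2lu_{0}+m=0$, so $\cos\theta=n/\sqrt{n^{2}+l^{2}-km}$ up to sign.

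The main obstacle is not technical but rather the bookkeeping of signs and absolute values: the identity~\eqref{eq:cosine-real-axis} should be read up to a harmless $\pm$ convention, in particular because the normalisation $\scalar[\tau]{\Space{R}{}}{\Space{R}{}}=\pm1$ degenerates at $\tau=0$, where $\scalar[0]{\Space{R}{}}{\Space{R}{}}=0$. The conceptually pleasing observation in part~(4), worth emphasising in the write-up, is that a single Euclidean-angle formula applies uniformly to circles, parabolas and hyperbolas precisely because the slope $(ku_{0}-l)/n$ at the $u$-axis is $\tau$-independent.
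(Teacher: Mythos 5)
Your proposal is correct and follows essentially the same route as the paper: parts (1)--(3) by direct computation in the $(n,l,k,m)$-coordinates (your three orthogonality equations are exactly the rows of the paper's rank-$3$ homogeneous system), and part (4) via the implicit derivative $\frac{dv}{du}=\frac{ku-l}{n}$ at $v=0$ combined with $ku^{2}-2lu+m=0$. You in fact supply more detail than the paper (which dismisses parts (1) and (3) as ``a short calculation''), and your caveat about the $\pm$ sign and the degenerate normalisation at $\tau=0$ is a fair observation consistent with $Q$ being defined only up to a factor.
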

\begin{proof}
  The first statement is verified by a short calculation. A form \(Q=
  \begin{pmatrix}
    l+n&-m\\k&-l+n
  \end{pmatrix}
  \) in the
  second statement may be calculated from the homogeneous system:
  \begin{displaymath}
    \begin{pmatrix}
      0&-2x&x^2&1\\
      0&-2y&y^2&1\\
      -2&0&-\tau&1
    \end{pmatrix}
    \begin{pmatrix}
      n\\l\\k\\m
    \end{pmatrix}=
    \begin{pmatrix}
      0\\0\\0
    \end{pmatrix},
  \end{displaymath}
  which has the rank \(3\) if \(x\neq y\). The third statement
  can be checked by a calculation as well. Finally, the last item is a
  particular case of the more general statement as indicated. Yet, we
  can derive it here from the implicit derivative
  \(\frac{dv}{du}=\frac{ku-l}{n}\) of the function \(k(u^2+\tau
  v^2)-2lu-2nv+m=0\)~\eqref{eq:quadratics-equation} at the point
  \((u,0)\). Note that this value is independent from \(\tau\). Since
  this is the tangent of the intersection angle with the real line,
  the square of the cosine of this angle is:
  \begin{displaymath}
    \frac{1}{1+(\frac{dv}{du})^2}=\frac{n^2}{l^2+k^2 u^2+n^2-2 k u
      l}=\frac{n^2}{l^2+n^2-km}=
    \frac{\scalar{Q}{\Space{R}{}}^2}{\scalar[e]{Q}{Q}},
  \end{displaymath}
  if \(ku^2-2  u l+m=0\).
\end{proof}
Also, we note that, the independence of the left-hand side
of~\eqref{eq:cosine-real-axis} from  \(x\) can be shown
from basic principles. Indeed, for a fixed \(t\) the subgroup
\(H_\tau\) acts transitively on the family of triples \(\{x,
\frac{x+\tau t}{tx+1}, \alli\}\), thus \(H_\tau\) acts transitively on
all bilinear forms orthogonal to such triples. However, the left-hand
side of~\eqref{eq:cosine-real-axis} is \(\SL\)-invariant, thus may not
depend on \(x\). This simple reasoning cannot provide the exact
expression in the right-hand side of~\eqref{eq:cosine-real-axis},
which is essential for the geometric interpretation of the Poincar\'e
extension.

To restore a cycle from its intersection points with the real line we
need also to know its cycle product with the real line. If this
product is non-zero then the sign of the parameter \(n\) is
additionally required.  At the cycles' language, a common point of
cycles \(\cycle{}{}\) and \(\cycle[\tilde]{}{}\) is encoded by a cycle
\(\hat{C}\) such that:
\begin{equation}
  \label{eq:cycles-passing-point}
  \scalar[e]{\cycle[\hat]{}{}}{\cycle{}{}}=
  \scalar[e]{\cycle[\hat]{}{}}{\cycle[\tilde]{}{}}=
  \scalar[\tau]{\cycle[\hat]{}{}}{\cycle[\hat]{}{}}=0.
\end{equation}
For a given value of \(\tau\), this produces two linear and one
quadratic equation for parameters of \(\hat{C}\). Thus, a pair of
cycles may not have a common point or have up to two such
points. Furthermore, M\"obius-invariance of the above
conditions~\eqref{eq:cosine-real-axis}
and~\eqref{eq:cycles-passing-point} supports the geometrical
construction of Poincar\'e extension, cf. Lem.~\ref{le:pencil-common-point}:
\begin{lem}
  \label{le:poincare-geom-general}
  Let a family consist of cycles, which are \(e\)-orthogonal to a
  given \(\tau\)-isotropic cycle \(\cycle[\hat]{}{}\) and have the
  fixed value of the fraction in the left-hand side
  of~\eqref{eq:cosine-real-axis}. Then, for a given M\"obius
  transformation \(g\) and any cycle \(\cycle{}{}\) from the
  family, \(g\cycle{}{}\) is \(e\)-orthogonal to the
  \(\tau\)-isotropic cycle \(g\cycle[\hat]{}{}\) and has the same
  fixed value of the fraction in the left-hand side
  of~\eqref{eq:cosine-real-axis} as \(\cycle{}{}\).
\end{lem}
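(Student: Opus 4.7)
The plan is to prove the lemma purely algebraically, by reducing both claims to the \(\SL\)-invariance of the \(\tau\)-pairings together with one elementary observation about the matrix representing the real line. The two facts I want to lean on are, first, that each pairing \(\scalar[\tau]{\cdot}{\cdot}\) is invariant under the conjugation action \(Q\mapsto gQg^{-1}\) for every \(g\in\SL\) (this is the corollary after Lemma~\ref{lem:biliniar-inner} for \(\tau=e\), and the same argument carries over verbatim to \(p\) and \(h\) since \((gQg^{-1})_\tau = g Q_\tau g^{-1}\) by the explicit description \(Q_e=Q\), \(Q_h=-Q^{-1}\), \(Q_p=\tfrac12(Q_e+Q_h)\), combined with cyclicity of the trace); and second, that the form \(\Space{R}{}= 2^{-1/2} I\) is a scalar multiple of the identity, so \(g\Space{R}{}g^{-1} = \Space{R}{}\) for every \(g\in\SL\).

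With these in hand, the four verifications are immediate. The image \(g\cycle[\hat]{}{}\) remains \(\tau\)-isotropic because \(\scalar[\tau]{g\cycle[\hat]{}{}g^{-1}}{g\cycle[\hat]{}{}g^{-1}} = \scalar[\tau]{\cycle[\hat]{}{}}{\cycle[\hat]{}{}} = 0\). The \(e\)-orthogonality of \(g\cycle{}{}\) to \(g\cycle[\hat]{}{}\) follows in exactly the same way from invariance of \(\scalar[e]{\cdot}{\cdot}\). For the fraction on the left of~\eqref{eq:cosine-real-axis}, the denominator is preserved since \(\scalar[\tau]{g\cycle{}{}g^{-1}}{g\cycle{}{}g^{-1}}=\scalar[\tau]{\cycle{}{}}{\cycle{}{}}\); and the numerator is preserved by inserting \(g\Space{R}{}g^{-1}=\Space{R}{}\) and then applying invariance:
\begin{displaymath}
  \scalar[\tau]{g\cycle{}{}g^{-1}}{\Space{R}{}}
  = \scalar[\tau]{g\cycle{}{}g^{-1}}{g\Space{R}{}g^{-1}}
  = \scalar[\tau]{\cycle{}{}}{\Space{R}{}}.
\end{displaymath}

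I do not expect any real obstacle: the whole statement is a clean packaging of the invariance properties already set up in Sections~\ref{sec:cycles} and~\ref{sec:extending-cycles}. The only point one has to be slightly careful about is confirming that the invariance proof given for the \(e\)-pairing really does extend to all three values of \(\tau\); once that is noted, the rest is bookkeeping. In particular, no new calculation with the parameters \((k,l,n,m)\) is needed, and the geometric content—that \(g\) sends the family of curves focally orthogonal to \(\cycle[\hat]{}{}\) and meeting \(\Space{R}{}\) at a fixed angle to the analogous family attached to \(g\cycle[\hat]{}{}\)—emerges automatically from the algebra.
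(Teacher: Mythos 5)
Your argument is correct and is essentially the proof the paper intends: the lemma is stated right after the remark that it follows from the M\"obius-invariance of the conditions \eqref{eq:cosine-real-axis} and \eqref{eq:cycles-passing-point}, and your write-up simply makes that invariance explicit via \((gQg^{-1})_\tau=gQ_\tau g^{-1}\), cyclicity of the trace, and the observation that \(\Space{R}{}=2^{-1/2}I\) is central and hence fixed by conjugation. No gap; the only point worth noting is that the invariance of all three pairings is already guaranteed by Lem.~\ref{le:sl2-invariant-pairing}, so your verification for \(\tau=p,h\) is a (correct) double-check rather than a new requirement.
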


Summarising the geometrical construction, the Poincar\'e extension based on
two intervals and the additional data produces two situations:
\begin{enumerate}
\item If the cycles \(C\) and \(\tilde{C}\) are orthogonal to the real line, then
  a pair of overlapping cycles produces a point of the elliptic upper
  half-plane, a pair of disjoint cycles defines a point of the
  hyperbolic. However, there is no orthogonal cycles uniquely defining
  a parabolic extension.
\item If we admit cycles, which are not orthogonal to the real line,
  then the same pair of cycles may define any of the three different
  types (EPH) of extension.
\end{enumerate}
These peculiarities make the extension based on three intervals,
described above, a bit more preferable.

\section{Concluding remarks}
\label{sec:concl-remarks-open}

Based on the consideration in
Sections~\ref{sec:cycles}--~\ref{sec:geom-cycl} we describe the
following steps to carry out the generalised extension procedure:
\begin{enumerate}
\item Points of the extended space are equivalence classes of aligned
  triples of cycles in \(P\Space{R}{1}\), see
  Defn.~\ref{de:aligned-triples-intervals}. The equivalence relation
  between triples will emerge at step~\ref{it:equivalence-triples}.
\item \label{it:one-param-subgroup} A triple \(T\) of different cycles
  defines the unique one-parameter continues subgroup \(S(t)\) of
  M\"obius transformations as defined in
  Prop.~\ref{pr:triple-subgroup}.
\item \label{it:equivalence-triples} Two triples of cycles are
  equivalent if and only if the subgroups defined in
  step~\ref{it:one-param-subgroup} coincide (up to a parametrisation).
\item The geometry of the extended space, defined by the equivalence
  class of a triple \(T\), is elliptic, parabolic or hyperbolic
  depending on the subgroup \(S(t)\) being similar \(S(t)=gH_\tau(t)
  g^{-1}\), \(g\in\SL\) (up to parametrisation) to
  \(H_\tau\)~\eqref{eq:a1n1k-universal} with \(\tau=-1\), \(0\) or
  \(1\) respectively. The value of \(\tau\) may be identified from the
  triple using Prop.~\ref{pr:type-of-subgroup}.
\item For the above \(\tau\) and \(g\in\SL\), the point of the
  extended space, defined by the the equivalence class of a triple
  \(T\), is represented by \(\tau\)-isotropic (see
  Defn.~\ref{de:product-cycles}(\ref{it:isotropic})) bilinear form
  \(g^{-1}
  \begin{pmatrix}
    1&\tau\\1&1
  \end{pmatrix}
  g\), which is \(S\)-invariant, see the end of
  Section~\ref{sec:triples-intervals}.
\end{enumerate}

Obviously, the above procedure is more complicated that the geometric
construction from Section~\ref{sec:geom-constr}. There are reasons for
this, as discussed in Section~\ref{sec:geom-cycl}: our procedure is
uniform and we are avoiding consideration of numerous subcases created
by an incompatible selection of parameters. Furthermore, our presentation
is aimed for generalisations to M\"obius transformations of moduli
over other rings. This can be considered as an analog of Cayley--Klein
geometries~\citelist{\cite[Apps.~A--B]{Yaglom79} \cite{Pimenov65a}}.

It shall be rather straightforward to adopt the extension for
\(\Space{R}{n}\). M\"obius transformations in \(\Space{R}{n}\) are
naturally expressed as linear-fractional transformations in Clifford
algebras~\cite{Cnops02a} with a similar classification of subgroups
based on fixed points~\cites{Ahlfors85a,Zoll87}. The M\"obius
invariant matrix presentation of cycles \(\Space{R}{n}\) is already
known~\citelist{\amscite{Cnops02a}*{(4.12)} \cite{FillmoreSpringer90a}
  \amscite{Kisil14a}*{\S~5}}. Of course, it is necessary to
enlarge the number of defining cycles from \(3\) to, say,
\(n+2\). This shall have a connection with Cauchy--Kovalevskaya
extension considered in Clifford analysis~\cites{Ryan90a,Sommen85a}.
Naturally, a consideration of other moduli and rings may require some
more serious adjustments in our scheme.

Our construction is based on the matrix presentations of cycles. This
techniques is effective in many different cases
\cites{Kisil12a,Kisil14a}. Thus, it is not surprising that such ideas
(with some technical variation) appeared independently on many
occasions~\citelist{\amscite{Cnops02a}*{(4.12)}
  \cite{FillmoreSpringer90a} \amscite{Schwerdtfeger79a}*{\S~1.1}
  \amscite{Kirillov06}*{\S~4.2}}. The interesting feature of the present
work is the complete absence of any (hyper)complex numbers. It deemed
to be unavoidable \amscite{Kisil12a}*{\S~3.3.4} to employ complex, dual
and double numbers to represent three different types of M\"obius
transformations extended from the real line to a plane. Also (hyper)complex
numbers were essential in~\cites{Kisil12a,Kisil06a} to define 
three possible types of cycle product~\eqref{eq:cycle-product-expl},
and now we managed without them.

Apart from having real entries, our matrices for cycles share the
structure of matrices from~\citelist{\amscite{Cnops02a}*{(4.12)}
  \cite{FillmoreSpringer90a} \cite{Kisil12a} \cite{Kisil06a}}. To
obtain another variant, one replaces the map
\(\map{i}\)~\eqref{eq:t-map-matrix} by
\begin{displaymath}
  \map{t}:\
  \begin{pmatrix}
    x_1&y_1\\
    x_2&y_2
  \end{pmatrix}
  \mapsto
  \begin{pmatrix}
    y_1&y_2\\
    x_1&x_2
  \end{pmatrix}\,.
\end{displaymath}
Then, we may define symmetric matrices in a manner similar
to~\eqref{eq:matrix-for-cycle-defn}: 
\begin{displaymath}
\cycle{t}{xy}=\frac{1}{2}M_{xy}\cdot\map{t}(M_{xy})=
  \begin{pmatrix}
    xy & \frac{x+y}{2}  \\
    \frac{x+y}{2} & 1
  \end{pmatrix}\,.
\end{displaymath}
This is the form of matrices for cycles similar
to~\citelist{\amscite{Schwerdtfeger79a}*{\S~1.1}
  \amscite{Kirillov06}*{\S~4.2}}. The property~\eqref{eq:covariance-Cxy}
with matrix similarity shall be replaced by the respective one with
matrix congruence: \(g\cdot \cycle{t}{xy} \cdot g^{t}=
\cycle{t}{x'y'}\). The rest of our construction may be adjusted
for these matrices accordingly.

\section*{Acknowledgments}
\label{sec:acknowledgments}
I am grateful to anonymous references for many useful suggestions.


\providecommand{\noopsort}[1]{} \providecommand{\printfirst}[2]{#1}
  \providecommand{\singleletter}[1]{#1} \providecommand{\switchargs}[2]{#2#1}
  \providecommand{\irm}{\textup{I}} \providecommand{\iirm}{\textup{II}}
  \providecommand{\vrm}{\textup{V}} \providecommand{\cprime}{'}
  \providecommand{\eprint}[2]{\texttt{#2}}
  \providecommand{\myeprint}[2]{\texttt{#2}}
  \providecommand{\arXiv}[1]{\myeprint{http://arXiv.org/abs/#1}{arXiv:#1}}
  \providecommand{\doi}[1]{\href{http://dx.doi.org/#1}{doi:
  #1}}\providecommand{\CPP}{\texttt{C++}}
  \providecommand{\NoWEB}{\texttt{noweb}}
  \providecommand{\MetaPost}{\texttt{Meta}\-\texttt{Post}}
  \providecommand{\GiNaC}{\textsf{GiNaC}}
  \providecommand{\pyGiNaC}{\textsf{pyGiNaC}}
  \providecommand{\Asymptote}{\texttt{Asymptote}}

\end{document}